\renewcommand{\epsilon}{\varepsilon}
\numberwithin{equation}{section}
\newtheoremstyle{thmlemcorr}{10pt}{10pt}{\itshape}{}{\bfseries}{.}{10pt}{{\thmname{#1}\thmnumber{ #2}\thmnote{ (#3)}}}
\newtheoremstyle{thmlemcorr*}{10pt}{10pt}{\itshape}{}{\bfseries}{.}\newline{{\thmname{#1}\thmnumber{ #2}\thmnote{ (#3)}}}
\newtheoremstyle{defi}{10pt}{10pt}{\itshape}{}{\bfseries}{.}{10pt}{{\thmname{#1}\thmnumber{ #2}\thmnote{ (#3)}}}
\newtheoremstyle{remexample}{10pt}{10pt}{}{}{\bfseries}{.}{10pt}{{\thmname{#1}\thmnumber{ #2}\thmnote{ (#3)}}}
\newtheoremstyle{ass}{10pt}{10pt}{}{}{\bfseries}{.}{10pt}{{\thmname{#1}\thmnumber{ A#2}\thmnote{ (#3)}}}
\theoremstyle{thmlemcorr}
\newtheorem{theorem}{Theorem}
\numberwithin{theorem}{section}
\newtheorem{lemma}[theorem]{Lemma}
\newtheorem{corollary}[theorem]{Corollary}
\newtheorem{proposition}[theorem]{Proposition}
\theoremstyle{thmlemcorr*}
\newtheorem{theorem*}{Theorem}
\newtheorem{lemma*}[theorem]{Lemma}
\newtheorem{corollary*}[theorem]{Corollary}
\newtheorem{proposition*}[theorem]{Proposition}
\newtheorem{problem*}[theorem]{Problem}
\newtheorem{conjecture*}[theorem]{Conjecture}
\theoremstyle{defi}
\theoremstyle{remexample}
\newtheorem{remark}[theorem]{Remark}
\theoremstyle{ass}
\newcommand{\Acal}{\mathcal{A}}
\newcommand{\Ecal}{\mathcal{E}}
\newcommand{\Ical}{\mathcal{I}}
\newcommand{\Mcal}{\mathcal{M}}
\newcommand{\Ocal}{\mathcal{O}}
\newcommand{\Scal}{\mathcal{S}}
\DeclareMathOperator{\curl}{curl}
\DeclareMathOperator{\dist}{dist}
\newcommand{\norm}[1]{\|#1\|}
\newcommand{\abs}[1]{|#1|}
\newcommand{\dd}{\;\mathrm{d}}
\newcommand{\N}{\mathbb{N}}
\newcommand{\R}{\mathbb{R}}
\newcommand{\Z}{\mathbb{Z}}
\newcommand{\T}{\mathbb{T}}
\newcommand{\loc}{\mathrm{loc}}
\newcommand{\weakly}{\rightharpoonup}
\newcommand{\eps}{\epsilon}
\newcommand{\ffi}{\varphi}
\newcommand{\Wmin}{\overline{W}}
\DeclareMathOperator{\Curl}{Curl}
\DeclareMathOperator{\SO}{SO}
\def\XXint#1#2#3{{\setbox0=\hbox{$#1{#2#3}{\int}$} 
\vcenter{\hbox{$#2#3$}}\kern-.5\wd0}}
\DeclareMathOperator{\Id}{Id}
\DeclareMathOperator{\Diss}{Diss}
\newcommand{\ui}[1]{^{\left(#1\right)}}
\newcommand\restrict[1]{\raisebox{-.5ex}{$|$}_{#1}}
\newcommand{\qand}{\quad\text{and}\quad}
\newcommand{\vecr}[1]{\left(\hspace*{-.9ex}\begin{array}{r}#1\end{array}\hspace*{-.9ex}\right)}
\newcommand{\RS}{M}
\title[Dimension reduction in plasticity]{Asymptotic analysis of single-slip crystal plasticity in the limit of vanishing thickness and rigid elasticity}
\author{Dominik Engl}
\address{Mathematisch-Geographische Fakult\"at, Katholische Universit\"at Eichst\"att-Ingolstadt, Osten\-stra{\ss}e 28, Eichst\"att, 85072, Germany}
\email{dominik.engl@ku.de}
\author{Stefan Kr\"omer}
\address{Institute of Information Theory and Automation, Czech Academy of Sciences, Pod vod\'arenskou ve\v{z}\'i 4, CZ-182 08, Prague 8, Czech Republic}
\email{skroemer@utia.cas.cz}
\author{Martin Kru\v{z}\'ik}
\address{Institute of Information Theory and Automation, Czech Academy of Sciences, Pod vod\'arenskou ve\v{z}\'i 4, CZ-182 08, Prague 8, Czech Republic}
\email{kruzik@utia.cas.cz}
\begin{document}

\begin{abstract}\vspace{-12pt} 
We perform via $\Gamma$-convergence a 2d-1d dimension reduction analysis of a single-slip elastoplastic body in large deformations. Rigid plastic and elastoplastic regimes are considered. In particular, we show that limit deformations can essentially freely bend even if subjected to the most restrictive constraints corresponding to the elastically rigid single-slip regime.
The primary challenge arises in the upper bound where the differential constraints render any bending without incurring an additional energy cost particularly difficult.
We overcome this obstacle with suitable non-smooth constructions and prove that a Lavrentiev phenomenon occurs if we artificially restrict our model to smooth deformations.
This issue is absent if the differential constraints are appropriately softened.

\vspace{8pt}
\noindent\textsc{MSC (2020):} 49J45 (primary) $\cdot$ 74K10 $\cdot$ 74C15

\noindent\textsc{Keywords:} dimension reduction, $\Gamma$-convergence, large strain, single-slip elastoplasticity

\noindent\textsc{Date:} \today.
\end{abstract}

\maketitle

\section{Introduction}
The elastoplastic behavior of a crystalline solid under the action of external loads results from a combination of reversible elastic and irreversible plastic effects \cite{GuFrAn10MTC, Lee69}.
The state of the body is specified in terms of its deformation $v:\Omega\to\R^n$ from a reference configuration $\Omega\subset\R^n$. 
Elastic and plastic effect are classically assumed to combine via the Kr\"{o}ner-Lee-Liu multiplicative decomposition of the total strain $\nabla v = F_{\rm el}F_{\rm pl}$, see \cite{Kro60, LeeLiu67FSEP, Lee69}. 
Here, the elastic strain  $F_{\rm el}$ describes elastic response of the material while $F_{\rm pl}$ records the accumulation of plastic distortion.  
This decomposition and alternative modeling assumptions have recently been discussed in, e.g., \cite{CDOR18, DaF15, Del18}. 
It is usually considered that plastic distortion causes no volume change, i.e.,  ${\rm det} F_{\rm pl}=1$; cf.~\cite{SimHug98CI}. 
Crystal plasticity assumes that any deformation that is applied to a material is accommodated by the process of slip, where dislocation motion occurs on a slip plane. 
In this article, we deal with a single slip which means that $F_{\rm pl}$ differs from the identity $\Id\in\R^{n\times n}$ by a rank-one and  traceless matrix, i.e.,  $F_{\rm pl}=\Id+\gamma s\otimes m$ where $s,m\in\R^n$ are unit and mutually perpendicular vectors denoting the slip direction and the slip-plane  normal, respectively, and $\gamma$ measures the amount of slip. 
Note that ${\rm det} F_{\rm pl}=1$ always holds. 
Elastoplastic evolution results from the competition of elastic-energy storage and plastic-dissipation mechanisms. 
A common and powerful approach to the description of elasto-plastic evolutionary  behavior is via variational methods and semidiscretization in time, see e.g.~\cite{CHM02}.  
Thus, we can define a condensed-energy-density  function arising from a time-incremental problem as 
\begin{align}\label{eq:energydensity}
	W(F) = \inf_{F=F_{\rm el}F_{\rm pl}}\big(W_{\rm el}(F_{\rm el}) + W_{\rm pl}(F_{\rm pl}) + \Diss(F_{\rm pl})\big).
\end{align}
Here, $W_{\rm el}:\R^{n\times n}\to[0,\infty]$ denotes the elastic stored energy density (i.e. a potential of the first Piola-Kirchhoff stress), $W_{\rm pl}:\textrm{SL}(n)\to\R$ is the defect energy (see e.g.~\cite{GuFrAn10MTC}), and $\Diss:\textrm{SL}(n)\to\R$ represents the density of energy dissipated if we change the plastic strain from the identity (i.e., purely elastic deformation) to  $F_{\rm pl}$ in one time step  of the rate-independent plastic evolution.
We refer to \cite{MieRou15RIST} for more details.

The elastoplasticity is occasionally modeled as  elastically rigid, meaning  that  $W_{\rm el}(F_{\rm el})$ is finite only if $F_{\rm el}\in \SO(n)$, i.e., if the elastic deformation is a rotation, see e.g.~\cite{CoT05}. 
We refer in this work to such situations as to {\it the hard constraint}. Usually, the stored energy density is assumed to be proportional to the distance of the right Cauchy-Green strain $C_{\rm el}=F_{\rm el}^\top F_{\rm el}$ from the identity, or in other words, to the distance of $F_{\rm el}$ to the set of proper rotations. This will be for us {\it  the soft constraint}. 
For additional information and some generalizations the reader is referred to \cite{Con06, CDK11, CoT05}, see also \cite{CoD20,CoD21} for more recent results.

In this article, we will study the 2d-1d dimension reduction problem associated with the static minimization problem with a plastic energy density subject to either the soft or the hard constraint, with the main result given by Theorem~\ref{theo:hard_constraint} phrased in the language of Gamma-convergence. 
As we will see, a central difficulty in its proof, more precisely, for the construction of recovery sequences, is precisely the potential rigidity of the hard constraint. 
In particular, this makes it hard to locally bend in any way without incurring additional energy cost. 
By contrast, if the constraint is softened enough, this is no longer an issue as shown in Proposition~\ref{prop:ratio}. 
We overcome the challenges stemming from the strict differential constraints with a construction specifically tailored to our single-slip shear constraint, in essence given in Lemma~\ref{lem:angle_e1e2} (see also Figure~\ref{fig:bend1} and \ref{fig:bend2}).
We also show that this or a similar nonsmooth mode of bending is crucial for the hard constrained model in the sense that a Lavrentiev phenomenon occurs in the dimension reduction problem if we artificially restrict our model to smooth deformations, at least in the special case $s=\pm e_1$ (slip along the membrane): On the one hand, for smooth deformations, bending is impossible without paying energy of the order of the membrane thickness $h$ as shown in Proposition~\ref{prop:smoothturn}, where bending is forced by ``short'' boundary conditions.
On the other hand, general (nonsmooth) admissible deformations can approach short (and therefore bent) limit deformations with a cost of $o(h)$, which is reflected in the fact that limit membrane energy of all short deformations after dimension reduction is zero by Theorem~\ref{theo:hard_constraint} (in case $s=\pm e_1$; see also Lemma~\ref{lem:Wmin}(a)). 
Here, note that our hard constraint is much more restrictive than, say, the natural constraints on nonlinear elasticity which are also known to cause Lavrentiev phenomena, but so far only in very specific circumstances \cite{FoHruMi03a}.

\subsection{Setup of the problem and main results}

For $h>0$ we set $\Omega_h := (0,L)\times (-h,h)$ as the reference configuration of a two-dimensional thin beam with length $L>0$ and thickness $2h>0$.
The slip direction and slip-plane normal shall be given by $s,m\in \Scal^1$ with $m=s^\perp= R_{\frac{\pi}{2}}s$. 
Then, we consider the energies
\begin{align}\label{thin_energy}
	\Ecal_{h}: W^{1,2}(\Omega_h;\R^2) \to [0,\infty],\: v\mapsto \int_{\Omega_h} W(\nabla v) \dd y,
\end{align}
with
\begin{align}\label{constrained_density}
	W(F) = \begin{cases}
				|Fm|^2-1 &\text{ if } F\in\Mcal_{s},\\
				\infty &\text{ otherwise},
			\end{cases}\quad F\in\R^{2\times 2},
\end{align}
where the set $\Mcal_{s}$ is consists of all rotated shears in direction $s$ with slip-plane normal $m$, i.e.,
\begin{align*}
	\Mcal_{s}:= \{F=R(\Id + \gamma s\otimes m) : R\in\SO(2),\, \gamma \in \R\} = \{F\in \R^{2\times 2} : \det F = 1, |Fs|=1\}.
\end{align*}
It is easy to see that 
\begin{align}\label{density_slip}
	|Fm|^2-1=\gamma^2\quad \text{if $F=R(\Id + \gamma s\otimes m)\in\Mcal_{s}$ for $R\in\SO(2)$ and $\gamma\in\R$},
\end{align}
i.e., $W$ in \eqref{constrained_density} measures the amount of slip, so that we can put $W_{\rm el}=0$ on $\SO(2)$ and $W_{\rm pl}(F_{\rm pl})=\gamma^2$ for $F_{\rm pl} = \Id+\gamma s\otimes m$ in \eqref{eq:energydensity}.
Here, we are interested in the limit behavior of two-dimensional elastoplastic structures if the thickness in the $y_2$ direction tends to zero. 
More precisely, after a common thin structure rescaling, i.e., setting 
\begin{align}\label{rescaling}
	u(x) = v(y) \text{ with } x=(x_1,x_2) = (y_1,\tfrac{1}{h}y_2),
\end{align}
we obtain the energies per unit volume
\begin{align}\label{constrained_energies}
	\Ical_{h}: W^{1,2}(\Omega;\R^2) \to [0,\infty],\: u\mapsto \int_{\Omega} W(\nabla^h u) \dd x,
\end{align}
where $\nabla^h u = (\partial_1 u,\tfrac{1}{h}\partial_2 u)$ denotes the rescaled gradient and $\Omega :=\Omega_1$.
The $\Gamma$-limit of the sequence of rescaled energies is described in our following main result. 

\begin{theorem}\label{theo:hard_constraint}
	The sequence $(\Ical_h)_h$ as in \eqref{constrained_energies}, see also \eqref{constrained_density}, $\Gamma$-converges with respect to the weak topology in $W^{1,2}(\Omega;\R^2)$ to
	\begin{align*}
		\Ical: W^{1,2}(\Omega;\R^2) \to [0,\infty],\: u\mapsto \begin{cases}
																2\displaystyle\int_0^L \Wmin^{\rm c}(u') \dd x_1, &\text{ if } u\in  \Acal,\\
																\infty &\text{ otherwise,}
															\end{cases}
	\end{align*}
	with $\Acal:=\{u\in W^{1,2}(\Omega;\R^2) : \partial_2 u = 0\}$. The function $\Wmin : \R^3 \to [0,\infty]$ is given by
	\begin{align}\label{minW}
		\Wmin(\xi) = \min_{d\in\R^{2}} W(\xi|d),
	\end{align}
	and $(\cdot)^c$ stands for the convex envelope.
	
	Moreover, any sequence $(u_h)_h\subset W^{1,2}(\Omega;\R^2)$ with vanishing mean value and bounded energy, i.e., 
	$\int_\Omega u_h \dd x=0$ and $\Ical_{h}(u_{h})<C$ for a constant $C>0$ and all $k\in\N$, has a subsequence (not relabeled) 
	such that $u_h\weakly u$ in $W^{1,2}(\Omega;\R^2)$ for $u\in \Acal$ as $h\to 0$. 
	Additionally, if $s=\pm e_1$, then $u$ satisfies $|u'|\leq 1$ a.e.~in $(0,L)$.
\end{theorem}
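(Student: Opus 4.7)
The starting point is to exploit the rigidity of the constraint $\nabla^h u_h\in\Mcal_s$ a.e. Writing $\nabla^h u_h = R_h(\Id+\gamma_h s\otimes m)$ with $R_h\in\SO(2)$ and $\gamma_h\in\R$, a direct computation using $s\cdot m=0$ gives $|\nabla^h u_h|^2 = 2+\gamma_h^2$. By \eqref{density_slip} we have $\int_\Omega \gamma_h^2\,dx = \Ical_h(u_h)\leq C$, so $(\partial_1 u_h)$ and $(\tfrac{1}{h}\partial_2 u_h)$ are uniformly bounded in $L^2$; in particular $\partial_2 u_h\to 0$ strongly in $L^2$. Combined with the zero mean assumption, Poincar\'e's inequality gives boundedness in $W^{1,2}$ and a subsequence $u_h\weakly u$ with $\partial_2 u=0$, i.e., $u\in\Acal$. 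For the special case $s=\pm e_1$, we have $m=\pm e_2$, hence $\partial_1 u_h = R_h e_1$ satisfies $|\partial_1 u_h|=1$ a.e. Testing the weak convergence $\partial_1 u_h\weakly u'$ in $L^2$ against $\psi\,u'/|u'|\ONE_{\{|u'|>0\}}$ for arbitrary nonnegative $\psi\in L^\infty(\Omega)$ yields $\int\psi|u'|\leq \int\psi$, from which $|u'|\leq 1$ a.e.

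\textbf{Lower bound.} Assume $u_h\weakly u$ with $\liminf_h\Ical_h(u_h)<\infty$; by the compactness step $u\in\Acal$ and $\partial_1 u_h\weakly u'$ in $L^2$. Pointwise minimization over the second column yields
\begin{equation*}
\Ical_h(u_h) = \int_\Omega W\bigl(\partial_1 u_h\bigm|\tfrac{1}{h}\partial_2 u_h\bigr)\,dx \;\geq\; \int_\Omega \Wmin(\partial_1 u_h)\,dx \;\geq\; \int_\Omega \Wmin^{\rm c}(\partial_1 u_h)\,dx.
\end{equation*}
Since $\Wmin^{\rm c}$ is convex and lower semicontinuous, the right-hand side is sequentially weakly lower semicontinuous in $L^2$, and using that $u$ depends only on $x_1$ and $\Omega=(0,L)\times(-1,1)$ we conclude $\liminf_h \Ical_h(u_h) \geq 2\int_0^L \Wmin^{\rm c}(u')\,dx_1 = \Ical(u)$.

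\textbf{Upper bound.} This is the main obstacle because the nonconvex differential constraint $\nabla^h u_h\in\Mcal_s$ forbids arbitrary interpolation between affine pieces. The plan is to proceed in two steps. First, by density of piecewise affine functions in $\Acal\cap\{\Ical<\infty\}$ together with continuity/lower semicontinuity of $\Ical$ on sublevel sets, a standard diagonalization reduces the construction of recovery sequences to $u\in\Acal$ whose derivative $u'$ is piecewise constant on $(0,L)$ with finitely many values $\xi_j$ satisfying $\Wmin^{\rm c}(\xi_j)<\infty$. Second, on each piece where $u'\equiv \xi$, we use Carath\'eodory's theorem to write $\Wmin^{\rm c}(\xi)=\sum_{i=1}^{N}\lambda_i\Wmin(\xi_i)$ with $\sum\lambda_i=1$, $\lambda_i\geq 0$, $\sum\lambda_i\xi_i=\xi$ and $N\leq 3$, and choose $d_i\in\R^2$ with $W(\xi_i|d_i)=\Wmin(\xi_i)$ and $(\xi_i|d_i)\in\Mcal_s$. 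We then perform a laminate in the $x_1$-direction on a scale $\delta=\delta(h)\to 0$ with $h/\delta\to 0$, on each subcell of length $\lambda_i\delta$ using the affine map $x\mapsto \xi_i x_1 + h d_i x_2$ (up to translation), which has rescaled gradient $(\xi_i|d_i)\in\Mcal_s$ and realizes energy density exactly $\Wmin(\xi_i)$.

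\textbf{Handling the transitions.} Between two consecutive affine subcells and across the interfaces between different pieces $\xi_j$, the rescaled gradient has to rotate from one element of $\Mcal_s$ to another while the deformation stays continuous and admissible. This is where Lemma~\ref{lem:angle_e1e2} enters: it provides an explicit nonsmooth construction (see Figures~\ref{fig:bend1}--\ref{fig:bend2}) that bends the deformation inside a transition layer of $x_1$-width of order $h$, with rescaled gradient remaining in $\Mcal_s$ throughout and the excess shear $\gamma^2$ controlled uniformly. The resulting contribution to $\Ical_h$ is of order (number of interfaces)$\cdot h = O(h/\delta)\to 0$, so it is absorbed in the $\limsup$. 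Weak convergence $u_h\weakly u$ follows from the scale separation $h\ll\delta\ll 1$ and the matching of boundary values between adjacent subcells, and the gradient identity gives $\limsup_h\Ical_h(u_h)\leq 2\int_0^L \Wmin^{\rm c}(u')\,dx_1$, completing the proof.
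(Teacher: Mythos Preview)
Your compactness and lower bound arguments are essentially identical to the paper's, and your upper bound follows the same overall architecture: reduce to piecewise affine targets, insert transition layers of width $O(h)$ built from the nonsmooth bending construction, and diagonalize. The one organizational difference is that you invoke a generic Carath\'eodory decomposition of $\Wmin^{\rm c}$ and laminate at an intermediate scale $\delta(h)$, whereas the paper exploits the explicit structure of $\Wmin^{\rm c}$ from Lemma~\ref{lem:Wmin}: for $s\neq\pm e_1$ and $|\xi|\geq 1$ one has $\Wmin^{\rm c}(\xi)=\Wmin(\xi)$, so no lamination is needed there, and for $|\xi|<1$ a two-point oscillation between rotations (with $\gamma=0$) suffices. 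Both routes work; yours is more generic, the paper's more explicit.

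There is, however, one imprecision worth flagging. You cite Lemma~\ref{lem:angle_e1e2} as the tool for the transition layers, but that lemma only covers $s\in\{\pm e_1,\pm e_2\}$ and only transitions between $\Id$ and $R_\theta$, i.e., zero shear on both sides. For general $s$ you need to rotate the construction into the right slip direction (Lemma~\ref{lem:changes}, packaged in Lemma~\ref{lem:angle}), and for transitions between $\RS(\theta_1,\gamma_1)$ and $\RS(\theta_2,\gamma_2)$ with $\gamma_i\neq 0$ --- which do occur at interfaces between pieces with $|\xi_j|>1$ when $s\neq\pm e_1$ --- you also need Lemma~\ref{lem:changegamma} to adjust the shear. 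The paper collects all of this in Lemma~\ref{lem:switch_rotated_shear}, which is the result you should invoke. Since $s=\pm e_1$ forces $\gamma_1=\gamma_2=0$ in that lemma, it is consistent that in your Carath\'eodory decomposition for that case all $\xi_i$ lie on the unit circle with $\gamma_i=0$, so Lemma~\ref{lem:angle_e1e2} does suffice there; but for general $s$ it does not.
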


The next corollary concerns the soft-constraint case, where we do not require rigid elasticity but allow a diverging elastic energy contribution as in \cite{CDK11}.

\begin{corollary}\label{cor:soft_constraint}
	For $\eps>0$ let
	\begin{align}\label{approx_densities}
		W_\eps: \R^{2\times 2}\to [0,\infty),\: F\mapsto \inf_{\gamma\in\R} \big(\tfrac{1}{\eps}\dist^2(F(\Id - \gamma s\otimes m),\SO(2)) + \gamma^2\big)
	\end{align}
	and consider the (rescaled) penalized energy
	\begin{align*}
		\Ical_{\eps,h}: W^{1,1}(\Omega;\R^2) \to [0,\infty),\: u\mapsto \int_{\Omega} W_\eps(\nabla^h v) \dd x;
	\end{align*}
	moreover, for any sequence $(\eps_k,h_k)_k$ with $(\eps_k,h_k)\to (0,0)$ we set $\Ical_k:=\Ical_{\eps_k,h_k}$.
	Then, the sequence $(\Ical_k)_k$ $\Gamma$-converges with respect to the weak topology in $W^{1,1}(\Omega;\R^3)$ to
	\begin{align*}
		\Ical: W^{1,1}(\Omega;\R^2) \to [0,\infty],\: u\mapsto \begin{cases}
																2\displaystyle\int_0^L \Wmin^{\rm c}(u') \dd x_1, &\text{ if } u\in \Acal,\\
																\infty &\text{ otherwise.}
															\end{cases}
	\end{align*}
	
	Moreover, any sequence $(u_k)_k\subset W^{1,1}(\Omega;\R^2)$ with vanishing mean value and bounded energy has a subsequence (not relabeled) 
	such that $u_k\weakly u$ in $W^{1,1}(\Omega;\R^2)$ for $u\in \Acal$ as $k\to \infty$. 
	Additionally, if $s=\pm e_1$, then $u$ satisfies $|u'|\leq 1$ a.e.~in $(0,L)$.
\end{corollary}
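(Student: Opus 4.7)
The plan is to reduce Corollary~\ref{cor:soft_constraint} to Theorem~\ref{theo:hard_constraint} via the pointwise inequality $W_\eps \leq W$ combined with a decomposition of $\nabla^{h_k} u_k$ into a part belonging to $\Mcal_s$ and an asymptotically vanishing remainder. For the $\Gamma$-$\limsup$ direction, I would first observe that for $F \in \Mcal_s$, writing $F = R(\Id + \gamma_0 s\otimes m)$ and inserting $\gamma = \gamma_0$ into the infimum defining $W_\eps(F)$ yields $W_\eps(F) \leq \gamma_0^2 = W(F)$; the inequality is trivial for $F \notin \Mcal_s$. Consequently any recovery sequence $(u_{h_k})_k$ supplied by Theorem~\ref{theo:hard_constraint} transfers directly, since $\Ical_k(u_{h_k}) \leq \Ical_{h_k}(u_{h_k}) \to \Ical(u)$ and weak convergence in $W^{1,2}$ implies weak convergence in $W^{1,1}$.

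For compactness and the $\Gamma$-$\liminf$ direction I would work through an intermediate hard-compliant field. Given any $(u_k)$ with $\int_\Omega u_k \dd x = 0$ and $\Ical_k(u_k) \leq C$, a measurable selection furnishes near-optimal $\gamma_k : \Omega \to \R$ and $R_k : \Omega \to \SO(2)$ for the infimum in \eqref{approx_densities}. Setting $\tilde E_k := \nabla^{h_k} u_k (\Id - \gamma_k s\otimes m) - R_k$, $F_k := R_k(\Id + \gamma_k s\otimes m) \in \Mcal_s$, and $E_k := \tilde E_k(\Id + \gamma_k s\otimes m)$ produces the decomposition $\nabla^{h_k} u_k = F_k + E_k$ in which $|F_k|^2 = 2 + \gamma_k^2$ shows that $F_k$ is bounded in $L^2$, while Cauchy-Schwarz combined with $\|\tilde E_k\|_{L^2}^2 \leq C\eps_k$ yields $\|E_k\|_{L^1} = O(\sqrt{\eps_k}) \to 0$. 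Thus $\{\nabla^{h_k} u_k\}$ is $L^1$-bounded and equi-integrable, and by Poincar\'e--Wirtinger and Dunford--Pettis a subsequence of $(u_k)$ converges weakly in $W^{1,1}(\Omega;\R^2)$ to some $u$. The identity $\partial_2 u_k = h_k F_{k,2} + E_{k,2} \to 0$ in $L^1$ forces $u \in \Acal$, and when $s = \pm e_1$ the a.e.~equality $|F_{k,1}| = |R_k e_1| = 1$ together with weak-$L^2$ lower semicontinuity gives $|u'| \leq 1$ a.e.

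For the lower bound I would invoke the Young measure $(\nu_x)_{x \in \Omega}$ generated by $(F_k)_k$ along a further subsequence. Since $F_k \in \Mcal_s$ a.e.~and is $L^2$-bounded, $(\nu_x)_x$ is a family of probability measures with $\supp \nu_x \subset \Mcal_s$ a.e., and its barycenter $\bar\nu_x$ equals the weak-$L^2$ limit of $F_k$; matching weak $L^1$ limits in the first column ($F_{k,1} = \partial_1 u_k - E_{k,1} \weakly u'$) shows $(\bar\nu_x)_1 = u'(x_1)$. Standard Young-measure lower semicontinuity (valid because $W$ is nonnegative and lower semicontinuous, with value $+\infty$ off the closed set $\Mcal_s$) then delivers
\[
\liminf_k \Ical_k(u_k) \geq \liminf_k \int_\Omega W(F_k) \dd x \geq \int_\Omega \int_{\Mcal_s} W(F) \dd\nu_x(F) \dd x.
\]
The bound $W(F) \geq \Wmin(F_1)$ from \eqref{minW}, applied after pushing $\nu_x$ forward to $\mu_x := (F \mapsto F_1)_\# \nu_x$, combined with Jensen's inequality for the convex envelope $\Wmin^{\rm c}$, yields $\int \Wmin \dd\mu_x \geq \Wmin^{\rm c}(\bar\mu_x) = \Wmin^{\rm c}(u'(x_1))$; integrating over $\Omega$ (of area $2L$) produces the claimed $\Ical(u) = 2\int_0^L \Wmin^{\rm c}(u') \dd x_1$.

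The principal difficulty I anticipate lies in the compactness step: unlike in the hard-constraint regime, one only has an a priori $L^1$-bound on $\nabla^{h_k} u_k$, so both the Dunford--Pettis extraction and the Young-measure construction depend crucially on the decomposition above, and the inclusion $\supp\nu_x \subset \Mcal_s$ is essential to make sense of the lower-semicontinuity estimate with the extended-valued integrand $W$.
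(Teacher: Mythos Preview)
Your proposal is correct and matches the paper's proof in its overall architecture: the same decomposition $\nabla^{h_k}u_k = F_k + E_k$ with $F_k\in\Mcal_s$ bounded in $L^2$ and $E_k\to 0$ in $L^1$ (the paper calls these $A_k$ and $B_k$), the same compactness argument, and the same upper bound by inheriting the recovery sequences of Theorem~\ref{theo:hard_constraint} via $W_\eps\leq W$.

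The only genuine difference is in the lower bound. You pass through the Young measure generated by $(F_k)_k$, push it forward to the first column, and apply Jensen for $\Wmin^{\rm c}$. The paper does this in one line without Young measures: since $\Ical_k(u_k)\geq\int_\Omega\gamma_k^2\,\di x=\int_\Omega W(F_k)\,\di x\geq\int_\Omega\Wmin^{\rm c}(F_ke_1)\,\di x$ and $F_ke_1\weakly u'$ weakly in $L^2$, the convexity (and, for $s\neq\pm e_1$, quadratic growth) of $\Wmin^{\rm c}$ gives weak lower semicontinuity of $\xi\mapsto\int_\Omega\Wmin^{\rm c}(\xi)\,\di x$ directly. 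Your Young-measure route is valid but is more machinery than needed here; the paper's shortcut exploits that one only needs the convex envelope applied to the \emph{first column}, so no measure-level relaxation is required.
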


\begin{remark}
	 a) In the same spirit as in \cite{CDK11}, we can generalize our result to the case where we replace the elastic term $W_{\rm el}(F_{\rm el}) = \frac{1}{\eps}\dist^2(F_{\rm el},\SO(2))$ in \eqref{approx_densities} by one that satisfies
	\begin{align*}
		c\dist^2(F_{\rm el},\SO(2)) \leq \eps W_{\rm el}(F_{\rm el}) \leq C\dist^2(F_{\rm el},\SO(2))
	\end{align*}
	for constants $c,C>0$.
	
	\medskip
	
	b) The density $W_\eps$ as in \eqref{approx_densities} has linear growth, cf.~\cite[Equation (1.4)]{CDK11}. 
	Therefore, the corresponding energy functional is defined on $W^{1,1}(\Omega;\R^2)$ and the $\Gamma$-convergence result in Corollary \ref{cor:soft_constraint} is formulated with respect to the weak topology in $W^{1,1}(\Omega;\R^2)$ in contrast to the $W^{1,2}(\Omega;\R^2)$-setting in Theorem \ref{theo:hard_constraint}.

	\medskip	
	
	c) In Lemma \ref{lem:Wmin}, we compute an explicit expression of the limit density $\Wmin^{\rm c}$ depending on the slip direction $s$. 
	We find that $\Wmin^{\rm c}$ has quadratic growth and coercivity if $s\neq \pm e_1$, while the cases $s=\pm e_1$ yield a trivial density with either zero or an infinite energy contribution.
\end{remark}

This paper is organized as follows. 
After introducing some notation, we provide the reader in Section \ref{sec:basic} with some basic about the limit density and rank-one compatibility within $\Mcal_s$. 
In Section \ref{sec:preliminaries}, we establish the key lemmas for the construction of piecewise affine recovery sequences for the hard-constraint case of Theorem \ref{theo:hard_constraint}.
The proof of this main theorem is addressed in Section \ref{sec:hard_constraint}; the soft-constraint version, Corollary \ref{cor:soft_constraint}, is handled in Section \ref{sec:soft_constraint}.
We finish this paper in Section \ref{sec:gap} with a discussion why the recovery sequences for the hard-constraint case (at least for $s=\pm e_1)$ are required be non-smooth with jumps in the derivative.

\section{Preliminaries}\label{sec:preliminaries}
\subsection{Notation}
We recall some notation used throughout the article. The vectors $e_1,\ldots,e_n\in\R^n$ for $n\in\N$ denote the standard basis vectors in $\R^n$. 
We denote the Euclidean (and Frobenius) norm on $\R^n$ (and $\R^{n\times n}$) by $|\cdot|$, and $\Scal^{n-1}$ is the unit sphere in $\R^n$ centered at zero.
The set of proper rotations on $\R^n$ is defined as 
$$\SO(2)=\{A\in\R^{n\times n}:\, A^\top A=AA^\top=\Id,\, \det A =1\}.$$ 
For an angle $\ffi\in \R$ we write $R_\ffi := \begin{pmatrix}\cos\ffi & -\sin \ffi \\ \sin \ffi & \cos \ffi\end{pmatrix}\in\SO(2),$ and set $a^\perp=R_{\frac{\pi}{2}}a$ for $a\in\R^2$.
Let $\T^1$ denote the one-dimensional flat torus, i.e., the interval $[0,2\pi]$ with the end points identified, reflecting $2\pi$-periodicity.
For $\theta\in \T^1$ and $\gamma\in\R$, define 
\begin{align*}
	\RS(\theta,\gamma)=\RS(\theta,\gamma;s):=R_{\theta}(\Id + \gamma s\otimes m)\in\Mcal_s.
\end{align*}
Notice that for any $F\in \Mcal_s$ there exist uniquely defined $(\theta,\gamma)\in \T^1\times \R$ such that $F=\RS(\theta,\gamma)$.
The function $\mathbbm{1}_E$ denotes the indicator function of a set $E\subset \R^m$, which equals $1$ on $E$ while vanishing elsewhere. 
Given a function $W:\R^n\to\R$, $W^c$ denotes its convex envelope, i.e., the pointwise supremum of all affine functions not greater than $W$. 
The set 
\begin{align*}
	\Acal:=\{u\in W^{1,2}(\Omega;\R^2) : \partial_2 u = 0\}
\end{align*}
 is the subspace of all Sobolev functions on $\Omega = (0,L)\times (-1,1)$ that are constant with respect to $x_2$. This set can alternatively be identified with $W^{1,2}((0,L);\R^2)$.

In the rest of this section, we collect some auxiliary results needed for the proofs of the main results in Section~\ref{sec:proofs}. 

\subsection{A few basic observations}\label{sec:basic}

First, we provide an explicit expression for $\Wmin^{\rm c}$, cf.~\eqref{minW}.
\begin{lemma}\label{lem:Wmin}
	Let $s,m\in\Scal^1$ with $m=s^\perp$ and $\Wmin$ as in \eqref{minW}.
	
	a) If $s=\pm e_1$, then
	\begin{align*}
		\Wmin^c(\xi) = \begin{cases}
						\infty &\text{ if } |\xi|>1,\\
						0 &\text{ if } |\xi|\leq 1,
					\end{cases},
	\end{align*}
	for all $\xi \in \R^2$.
	
	b) In the case $s \neq \pm e_1$, it holds that
	\begin{align*}
		\Wmin^{\rm c}(\xi) = \begin{cases}
						\frac{1}{s_2^2}|\xi|^2 - 2\frac{|s_1|}{s_2^2}\sqrt{|\xi|^2 - s_2^2} + \frac{s_1^2}{s_2^2} -1 &\text{ if } |\xi|>1,\\
						0 &\text{ if } |\xi|\leq 1,
					\end{cases}
	\end{align*}
	for all $\xi\in\R^2$; in particular $\Wmin^{\rm c}$ satisfies
	\begin{align}\label{Wmin_growth}
		c|\xi|^2 - C \leq \Wmin^{\rm c}(\xi) \leq C(1+|\xi|^2)
	\end{align}
	for some constants $c,C>0$.
	Moreover, for every $\xi\in \R^2$ with $|\xi|\geq 1$, there exist  $\theta\in \T^1$ and $\gamma\in\R$ such that
	\begin{align*}
		\xi = \RS(\theta,\gamma)e_1 \qand \Wmin^c(\xi) = \Wmin(\xi) = \gamma^2.
	\end{align*}	
\end{lemma}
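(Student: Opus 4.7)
The strategy is to compute $\Wmin$ explicitly from the structure of $\Mcal_s$, observe that it is radial, and then identify the claimed formula as its convex envelope by a one-dimensional analysis combined with an antipodal decomposition on the unit circle.

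\emph{Explicit formula for $\Wmin$.} For $F=(\xi\mid d)$, membership in $\Mcal_s$ amounts to $\det F=1$ and $|Fs|=1$. If $s=\pm e_1$, the second constraint forces $|\xi|=1$ and a direct minimization over $d$ gives $\Wmin=0$ on $\Scal^1$ and $+\infty$ elsewhere, which after convexification yields (a). If $s_2\neq 0$, parametrize $d=s_2^{-1}(v-s_1\xi)$ with $v\in\Scal^1$; the determinant constraint then reads $\langle\xi^\perp,v\rangle=s_2$, solvable iff $|\xi|\geq|s_2|$. Using $m=(-s_2,s_1)$ one computes $Fm=s_2^{-1}(-\xi+s_1 v)$ and
\[
|Fm|^2-1 \;=\; \frac{|\xi|^2+s_1^2-2s_1\langle\xi,v\rangle}{s_2^2} - 1.
\]
Decomposing $v$ in the orthonormal basis $\{\xi/|\xi|,\xi^\perp/|\xi|\}$ forces $\langle\xi,v\rangle=\pm\sqrt{|\xi|^2-s_2^2}$; selecting the sign that maximizes $s_1\langle\xi,v\rangle$ yields the stated explicit formula for $\Wmin(\xi)$ on $\{|\xi|\geq|s_2|\}$. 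In particular $\Wmin$ depends only on $|\xi|$, so $\Wmin(\xi)=w(|\xi|)$.

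\emph{Convex envelope.} A direct computation of $w'$ and $w''$ shows that $w$ is strictly convex and strictly increasing on $(1,\infty)$, with $w(1)=0$. Extending $w$ by zero on $[0,1]$ therefore produces a nondecreasing, convex function on $[0,\infty)$, so the radial candidate $g(\xi):=w(|\xi|)$ is convex on $\R^2$. Since $g\leq\Wmin$ with equality on $\{|\xi|>1\}$, we have $g\leq\Wmin^{\rm c}\leq\Wmin$. For the reverse inequality on $\{|\xi|\leq 1\}$, write $\xi=\tfrac12(p+q)$ with $p=\xi+t\nu$, $q=\xi-t\nu$, where $\nu\in\Scal^1\cap\xi^\perp$ and $t=\sqrt{1-|\xi|^2}$ (any antipodal pair if $\xi=0$); then $p,q\in\Scal^1$ and $\Wmin(p)=\Wmin(q)=0$, whence $\Wmin^{\rm c}(\xi)\leq 0=g(\xi)$. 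This proves $\Wmin^{\rm c}=g$, and \eqref{Wmin_growth} follows by expanding the explicit formula as $|\xi|\to\infty$.

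\emph{Attainment and main obstacle.} For $|\xi|\geq 1\geq|s_2|$ the minimum in \eqref{minW} is attained by some $F\in\Mcal_s$ with $Fe_1=\xi$; the unique representation $F=\RS(\theta,\gamma)$ recorded in Section~\ref{sec:preliminaries} together with \eqref{density_slip} yields $\Wmin(\xi)=W(F)=\gamma^2$, which combined with $\Wmin^{\rm c}=\Wmin$ on $\{|\xi|\geq 1\}$ finishes the final claim of (b). The chief technical point is the convex-envelope step: one must verify convexity of $g$ via the monotonicity and convexity of the extended $w$, and exhibit the antipodal witnesses on $\Scal^1$ that force $\Wmin^{\rm c}=0$ throughout the closed unit disk, despite $\Wmin$ itself being strictly positive on the annulus $\{|s_2|<|\xi|<1\}$. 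The remaining algebra is routine, modulo care with the sign of $s_1$ and with the case split $s=\pm e_1$ vs.~$s_2\neq 0$.
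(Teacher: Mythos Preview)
Your proof is correct and, on the convex-envelope step, more careful than the paper's own argument. The two approaches differ mainly in the parametrization used to compute $\Wmin$: the paper works directly in the $(\theta,\gamma)$ coordinates of $\Mcal_s$, reducing to the scalar equation $|\xi|^2=1-2\gamma s_1 s_2+\gamma^2 s_2^2$ and solving the quadratic for $\gamma$, whereas you parametrize the second column via $v=Fs\in\Scal^1$ and impose the determinant constraint as $\langle\xi^\perp,v\rangle=s_2$. Both routes produce the same radial formula; the paper's parametrization has the advantage that the attainment statement (the ``moreover'' clause) is immediate, while you recover it at the end by invoking the unique $(\theta,\gamma)$ representation of the minimizing $F$. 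Conversely, the paper essentially asserts $\Wmin^{\rm c}=\Wmin$ on $\{|\xi|>1\}$ and $\Wmin^{\rm c}=0$ on the unit disk without justification, whereas you verify convexity of the radial profile (via $w(1)=0$, $w'(1^+)=0$, $w''>0$ on $(1,\infty)$, hence the zero-extension is $C^1$ and convex) and exhibit explicit antipodal witnesses on $\Scal^1$ to force $\Wmin^{\rm c}\leq 0$ on the disk. Your treatment thus fills a gap the paper glosses over.
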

\begin{proof}
	Let $\xi\in \R^2$, then $\xi$ is the first column of an element in $\Mcal_{s}$ if and only if there exists $\gamma\in\R$ and $\theta\in \T^1$ such that
	$\xi = \RS(\theta,\gamma)e_1 = R_\theta(\Id + \gamma s\otimes m)e_1$. This is equivalent to the existence of $\gamma\in \R$ with
	\begin{align}\label{first_column}
		|\xi|^2 =|(\Id + \gamma s\otimes m)e_1|^2=\Big|\begin{pmatrix}1+\gamma s_1m_1\\\gamma s_2m_1\end{pmatrix}\Big|^2 = 1 + 2\gamma s_1m_1 + \gamma^2m_1^2 = 1 - 2\gamma s_1s_2 + \gamma^2 s_2^2.
	\end{align}
	The energy contribution associated to this shear is then exactly
	\begin{align}\label{shear_energy}
		W(\Id + \gamma s\otimes m) = \gamma^2.
	\end{align}
	
	a) If $s=\pm e_1$, then \eqref{first_column} can only be satisfied if $|\xi|=1$. In this case, we choose $\gamma=0$ to minimize \eqref{shear_energy}.
	A subsequent convexification of the energy yields the desired result.	

	b) Now, let $s\neq \pm e_1$. 
	Analogously to the case $s=\pm e_1$, it suffices to consider the case $|\xi|>1$, since $|\xi|=1$ yields a vanishing energy contribution with $\gamma=0$. 
	Solving \eqref{first_column} for $\gamma$ produces the solutions $\gamma_\pm$ with
	\begin{align*}
		\gamma_\pm^2 = \frac{1}{s_2^2}|\xi|^2 \pm 2\frac{s_1}{s_2^2}\sqrt{|\xi|^2 - s_2^2} + \frac{s_1^2}{s_2^2} - 1
	\end{align*}
	After selecting the solution with smaller magnitude, we obtain the formula in question.
	The quadratic growth and coercivity is apparent from the explicit formula.
\end{proof}

The next Lemma about rank-one connections within $\Mcal_s$ is extracted from \cite[Lemma 3.1]{ChK17} and is essential in the construction of piecewise affine functions whose gradients are contained in $\Mcal_s$.
The proof is essentially based on the case $s=e_1$ considering that $\Mcal_{s} = \Mcal_{e_1}S^T$ with $S=(s|m)\in\SO(2)$.
\begin{lemma}\label{lem:compatibility}
	Let $s\in\Scal^1$, $\theta_1,\theta_2\in \T^1$, and $\gamma_1,\gamma_2\in\R$. Then $\RS(\theta_1,\gamma_1)$ and $\RS(\theta_2,\gamma_2)$ are rank-one connected if and only if one of the following two conditions holds:
	
	$i)$ $\theta_2 = \theta_1$ and $\gamma_1\neq \gamma_2$; in this case,
	\begin{align*}
		\RS(\theta_2,\gamma_2) - \RS(\theta_1,\gamma_1) =  (\gamma_2 - \gamma_1)R_{\theta_1}s\otimes m.
	\end{align*}
	
	$ii)$ $\theta_1\neq\theta_2$ and $\gamma_2 - \gamma_1 = 2 \tan\big(\frac{\theta}{2}\big)$ with $\theta = \theta_2 - \theta_1 \in (-\pi,\pi)$; in this case,
	\begin{align*}
		\RS(\theta_2,\gamma_2) - \RS(\theta_1,\gamma_1) = \frac{\gamma_2 - \gamma_1}{4 + (\gamma_2 - \gamma_1)^2}R_\theta\big((\gamma_2-\gamma_1)s + 2m\big)\otimes\big(2s + (\gamma_2 + \gamma_1)m\big)
	\end{align*}
\end{lemma}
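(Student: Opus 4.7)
My plan is to first reduce to the canonical slip direction $s=e_1$, $m=e_2$ using the $\SO(2)$-equivariance. Since $s\otimes m = S(e_1\otimes e_2)S^\top$ with $S=(s|m)\in\SO(2)$ and $\SO(2)$ is commutative, one obtains
\[
\RS(\theta,\gamma;s)=S\,\RS(\theta,\gamma;e_1)\,S^\top,
\]
so $\RS(\theta_2,\gamma_2;s)-\RS(\theta_1,\gamma_1;s)$ and $\RS(\theta_2,\gamma_2;e_1)-\RS(\theta_1,\gamma_1;e_1)$ are conjugate under $S$ and share the same rank. Moreover, writing the latter as $a\otimes b$ yields the former as $(Sa)\otimes(Sb)$; in particular $e_1\mapsto s$ and $e_2\mapsto m$ on both tensor factors. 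It thus suffices to prove the statement for $s=e_1$ and then transport the decomposition.

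With $s=e_1$, I would compute $\det\bigl(\RS(\theta_2,\gamma_2)-\RS(\theta_1,\gamma_1)\bigr)$ directly from the explicit $2\times 2$ form $R_{\theta_i}\bigl(\begin{smallmatrix}1 & \gamma_i\\ 0 & 1\end{smallmatrix}\bigr)$. The cross terms $\gamma_i\gamma_j\sin\theta_k\cos\theta_\ell$ cancel after expansion, and the angle-difference identities collapse the remainder to
\[
2(1-\cos\theta)-(\gamma_2-\gamma_1)\sin\theta=2\sin(\theta/2)\bigl[2\sin(\theta/2)-(\gamma_2-\gamma_1)\cos(\theta/2)\bigr]
\]
with $\theta:=\theta_2-\theta_1$. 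Rank one is equivalent to the determinant vanishing together with the matrices being distinct. The first factor vanishes exactly when $\theta=0$ in $\TTT^1$, which combined with distinctness forces $\gamma_1\neq\gamma_2$, giving case $i)$. For $\theta\in(-\pi,\pi)\setminus\{0\}$ the second factor vanishes iff $\gamma_2-\gamma_1=2\tan(\theta/2)$, giving case $ii)$; the values $\theta=\pm\pi$ are excluded because $\cos(\theta/2)=0$ there and no finite $\gamma_i$ satisfies the equation.

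Finally I would produce the explicit rank-one forms. In case $i)$ I factor $R_{\theta_1}$ from both terms and read off
\[
\RS(\theta_1,\gamma_2)-\RS(\theta_1,\gamma_1)=(\gamma_2-\gamma_1)R_{\theta_1}e_1\otimes e_2,
\]
which transports to $(\gamma_2-\gamma_1)R_{\theta_1}s\otimes m$ via the $(Sa)\otimes(Sb)$ rule. In case $ii)$ I factor $R_{\theta_1}$ on the left and verify column-by-column that
\[
R_\theta\bigl(\Id+\gamma_2 e_1\otimes e_2\bigr)-\bigl(\Id+\gamma_1 e_1\otimes e_2\bigr)
\]
coincides with the proposed rank-one tensor after substituting $\gamma_2-\gamma_1=2\tan(\theta/2)$; the Weierstrass half-angle identities $\cos\theta=\tfrac{1-\tan^2(\theta/2)}{1+\tan^2(\theta/2)}$ and $\sin\theta=\tfrac{2\tan(\theta/2)}{1+\tan^2(\theta/2)}$ make both sides visibly equal once the prefactor $\tfrac{\gamma_2-\gamma_1}{4+(\gamma_2-\gamma_1)^2}=\tfrac{1}{2}\sin(\theta/2)\cos(\theta/2)$ is recognized. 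The main obstacle is purely algebraic bookkeeping around these half-angle identities; there is no conceptual difficulty once the reduction to $s=e_1$ has been made.
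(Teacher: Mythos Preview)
Your approach is correct and is essentially the paper's: both reduce to $s=e_1$ (the paper via $\Mcal_s=\Mcal_{e_1}S^\top$, you via the equivalent conjugation $\RS(\theta,\gamma;s)=S\,\RS(\theta,\gamma;e_1)\,S^\top$, which has the advantage of preserving the parameters $\theta,\gamma$), after which the paper simply defers to \cite[Lemma~3.1]{ChK17} for the computation you spell out. One caveat in your case $ii)$ verification: having factored $R_{\theta_1}$ on the left, you must multiply it back at the end, so the first tensor factor will carry $R_{\theta_1}R_\theta=R_{\theta_2}$ rather than the bare $R_\theta$ printed in the displayed formula (a quick check with, e.g., $\theta_1=\tfrac{\pi}{2}$, $\theta_2=\pi$, $\gamma_1=0$, $\gamma_2=2$ shows the stated formula is off by exactly this factor); this is a typo in the statement, not a flaw in your argument.
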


\subsection{Building blocks for the construction of recovery sequences}\label{sec:building_blocks}

Below, we present a collection of lemmas intended as building blocks for the construction of a piecewise defined recovery sequence.
For the first few elementary lemmas, the general philosophy is the following: 
We start with a given admissible deformation whose gradient coincides with fixed, prescribed rotated shears at the both ends (i.e., for $x_1\gg 0$ and $x_1\ll 0$), and then modify it to another of essentially the same kind, but with changed parameters for the rotated shears at the two ends.
Ultimately, we intend to glue many such pieces to construct suitable recovery sequences, in such a way that wherever two pieces overlap, they have identical gradient, so that a continuous transition is easily achieved by adding appropriate constants if necessary. 
The crucial elementary building block is the one that allows us to form a kink without violating the hard constraint, presented in Lemma~\ref{lem:angle_e1e2} for the cases of $s\in \{\pm e_1,\pm e_2\}$, to be generalized later with the help of the other lemmas. 
The joint result of the constructions is summarized in Lemma~\ref{lem:switch_rotated_shear}, which then is used at the heart of the proof of Theorem~\ref{theo:hard_constraint}.

\begin{lemma}[Change $s$ globally]\label{lem:changes}
Let $s,\hat{s}\in\Scal^1$ with $s\cdot \hat{s}\geq 2^{-\frac{1}{2}}$ (i.e., forming an angle of at most $\frac{\pi}{4}$), and $B>0$. 
Moreover, let $w\in W_\loc^{1,\infty}(\R\times (-B,B);\R^2)$ such that $\nabla w\in \Mcal_s$ a.e.~and
\begin{align}\label{BBBC3}
	\nabla w(x_1,\cdot)=
	\begin{cases}
		\RS(\theta_1,\gamma_1;s)&~\text{for $x_1<-\tfrac{1}{2}B$}.\\
		\RS(\theta_2,\gamma_2;s)&~\text{for $x_1>\tfrac{1}{2}B$.}
	\end{cases}
\end{align}
Then, there exists a function
$v\in W_\loc^{1,\infty}(\R\times (-\tfrac{1}{8}B,\tfrac{1}{8}B);\R^2)$ such that $\nabla v\in \Mcal_{\hat{s}}$ a.e.~and
\begin{align}\label{BBBCs}
	\nabla v(x_1,\cdot)=
	\begin{cases}
		\RS(\theta_1,\gamma_1;\hat{s})&~\text{for $x_1<-\tfrac{7}{8}B$},\\
		\RS(\theta_2,\gamma_2;\hat{s})&~\text{for $x_1>\tfrac{7}{8} B$.}
	\end{cases}
\end{align}
\end{lemma}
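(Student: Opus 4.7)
The plan is to realize the transition from slip direction $s$ to $\hat s$ as a rotation of the independent variable of $w$. Let $Q=R_\alpha\in\SO(2)$ be the unique rotation sending $\hat s$ to $s$ (so that $Q^\top s=\hat s$ and $Q^\top m=\hat m$); by hypothesis $|\alpha|\le\pi/4$. Since $\SO(2)$ is abelian, $Q^\top R_\theta Q=R_\theta$, and a short calculation yields the key algebraic identity
\begin{equation*}
    Q^\top\RS(\theta,\gamma;s)\,Q \;=\; R_\theta\bigl(\Id+\gamma(Q^\top s)\otimes(Q^\top m)\bigr) \;=\; \RS(\theta,\gamma;\hat s)
\end{equation*}
for every $\theta\in\T^1$ and $\gamma\in\R$. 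This strongly suggests setting $v(x):=Q^\top w(Qx)$, since then $\nabla v(x)=Q^\top\nabla w(Qx)\,Q\in\Mcal_{\hat s}$ and the parameters $(\theta_i,\gamma_i)$ are preserved at the two ends.

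The preparatory step is to extend $w$ to a function $\tilde w$ defined wherever $Q(\cdot)$ can land. Because $\nabla w$ is the constant rotated shear $\RS(\theta_i,\gamma_i;s)$ on each half-plane $\{x_1<-\tfrac12 B\}$ and $\{x_1>\tfrac12 B\}$, $w$ agrees there with an affine map, which I extend unchanged to all of $\R^2$. On the middle vertical slab $[-\tfrac12 B,\tfrac12 B]\times(-B,B)$ I keep $\tilde w:=w$; the geometric check below shows no further extension is needed. I then define
\begin{equation*}
    v(x)\;:=\;Q^\top\tilde w(Qx)\qquad\text{for }x\in\R\times(-\tfrac18 B,\tfrac18 B).
\end{equation*}

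What remains is a purely trigonometric verification, using the worst-case bounds $|\sin\alpha|\le 1/\sqrt2$ and $1/\cos\alpha\le\sqrt2$. If $|(Qx)_1|\le\tfrac12 B$ and $|x_2|<\tfrac18 B$, then $|x_1|\le B\sqrt2/2+B/8$ and $|(Qx)_2|=|\sin\alpha\,x_1+\cos\alpha\,x_2|<B$, so $\tilde w(Qx)=w(Qx)$ is defined. Conversely, for $x_1<-\tfrac78 B$ and $|x_2|<\tfrac18 B$ one checks $(Qx)_1\le -\tfrac78 B\cos\alpha+\tfrac18 B|\sin\alpha|\le -3B\sqrt 2/8<-\tfrac12 B$, so $\tilde w(Qx)$ is the affine piece with gradient $\RS(\theta_1,\gamma_1;s)$, and the key identity delivers $\nabla v(x)=\RS(\theta_1,\gamma_1;\hat s)$ as required; the case $x_1>\tfrac78 B$ is symmetric.

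The only genuine obstacle is this geometric bookkeeping: the slab has to be shrunk enough that after rotating by up to $\pi/4$ its middle portion still lies in $\R\times(-B,B)$ where $w$ is actually given, while the end regions $\{|x_1|>\tfrac78 B\}$ are pushed safely into the affine zones $\{|y_1|>\tfrac12 B\}$. The factor $\tfrac18$ is what provides the comfortable slack needed on both counts for the worst-case angle $\alpha=\pm\pi/4$.
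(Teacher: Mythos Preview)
Your proof is correct and follows essentially the same approach as the paper. Both arguments conjugate $w$ by the rotation carrying $\hat s$ to $s$, use the identity $Q^\top\RS(\theta,\gamma;s)Q=\RS(\theta,\gamma;\hat s)$ (the paper writes this as $(\Id+\gamma s\otimes m)R_{-\theta}=R_{-\theta}(\Id+\gamma\hat s\otimes\hat m)$), and then verify the same geometric fit: that the thinned strip of width $\tfrac18 B$, rotated by at most $\pi/4$, has its middle portion inside $\R\times(-B,B)$ while its ends $\{|x_1|>\tfrac78 B\}$ land in the affine zones $\{|y_1|>\tfrac12 B\}$. The only cosmetic difference is that you extend $w$ to $\tilde w$ via the affine pieces before rotating, whereas the paper rotates first on a bounded box and then attaches the affine tails to $v$; these are equivalent.
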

\begin{proof}
Since $s\cdot \hat{s}\geq \frac{1}{\sqrt{2}}$, there exists $\theta\in [-\pi/4,\pi/4]$
such that $\hat{s}=R_\theta s$.
We abbreviate $b:=\frac{1}{8}B$ and define $v(x):=R_{\theta} w(R_{-\theta} x)$ for $\abs{x_1}\leq 7b$ and $\abs{x_2}<b$, 
whence
\[
	\nabla v(x)=R_{\theta} (\nabla w)(R_{-\theta} x) R_{-\theta} \in \Mcal_{\hat{s}},
\]
the latter because
$(\Id+\gamma s\otimes m)R_{-\theta}=R_{-\theta}+\gamma s\otimes (R_\theta m)=R_{-\theta} (\Id+\gamma \hat{s}\otimes \hat{m})$ for all $\gamma\in\R$.
If we continuously extend $v$ by affine functions on both sides for $\abs{x_1}>7b$, 
with constant gradients given by \eqref{BBBCs},
the function $v$ is well defined on $\R\times (-b,b)$ as long as $R_{-\theta}$ maps the two borderlines in \eqref{BBBCs} to sets where $\nabla w$ is fixed by \eqref{BBBC3},
i.e., if
\begin{align*}
	&\text{$R_{-\theta}$ maps $\{7 b\}\times (-b,b)$ 
into $\big(\tfrac{1}{2}B,\infty\big)\times (-B,B)$, and}\\
	&\text{$R_{-\theta}$ maps $\{-7 b\}\times (-b,b)$ 
into $\big(-\infty,-\tfrac{1}{2}B\big)\times (-B,B)$,}
\end{align*}
see also Figure \ref{fig:changes}.
Since $7^2 b^2+b^2< B^2$, it is clear that $R_{-\theta}(\pm 7 b,\pm b)^\top\in (-B,B)^2$. 
It remains to show that $\abs{e_1\cdot R_{-\theta}(\pm 7 b,\pm b)}> \frac{1}{2}B$.
This does hold for all $\abs{\theta}\leq \frac{\pi}{4}$ since
\[
	\cos(\abs{\theta})7 b-\sin(\abs{\theta}) b 
\geq \frac{1}{\sqrt{2}}7 b-\frac{1}{\sqrt{2}} b=
\frac{1}{\sqrt{2}}\frac{3}{4}B > \frac{1}{2}B.
\] 
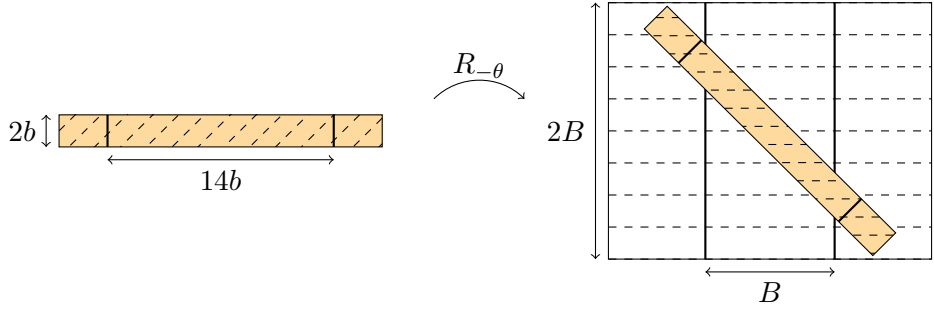
\begin{figure}
	\centering
	\begin{tikzpicture}[scale=.85]
		\draw [fill=Dandelion!50!white] (-2.5,-.25) --++ (5,0) --++ (0,.5) --++ (-5,0) -- cycle;
		\draw [thick] (-1.75,-.25) --++ (0,.5);
		\draw [thick] (1.75,-.25) --++ (0,.5);
		\draw [<->] (-1.75,-0.45) --++ (1.75,0)  node [anchor = north] {$14b$} --++(1.75,0);
		\draw [<->] (-2.7,-.25) --++ (0,.25)  node [anchor = east] {$2b$} --++(0,.25);
		\begin{scope}
			\clip (-2.5,-.25) --++ (5,0) --++ (0,.5) --++ (-5,0) -- cycle;
			\foreach \x in {-15,...,5}
			    \draw[dashed] (-2.5,0.4*\x) --++ (45:8);
		\end{scope}

		\draw [->] (3.3,.5) to [out=45,in=135] (4.7,.5);
		\draw (4,1) node {$R_{-\theta}$};		
			
		\begin{scope}[shift={(8.5,0)}]
			\draw (-2.5,-2) --++ (5,0) --++ (0,4) --++ (-5,0) -- cycle;
			\draw [thick] (-1,-2) --++ (0,4);
			\draw [thick] (1,-2) --++ (0,4);
			\draw [<->] (-1,-2.2) --++ (1,0)  node [anchor = north] {$B$} --++(1,0);
			\draw [<->] (-2.7,-2) --++ (0,2)  node [anchor = east] {$2B$} --++(0,2);
			\foreach \x in {-4,...,4}
		    \draw[dashed] (-2.5,0.5*\x) --++ (5,0);
		    
		    \begin{scope}[shift={(0,0)},rotate=-45]
		    	\draw [fill=Dandelion!50!white] (-2.5,-.25) --++ (5,0) --++ (0,.5) --++ (-5,0) -- cycle;
				\draw [thick] (-1.75,-.25) --++ (0,.5);
				\draw [thick] (1.75,-.25) --++ (0,.5);
				\begin{scope}
					\clip (-2.5,-.25) --++ (5,0) --++ (0,.5) --++ (-5,0) -- cycle;
					\foreach \x in {-15,...,5}
					    \draw[dashed] (-2.5,0.4*\x) --++ (45:8);
				\end{scope}
		    \end{scope}
		\end{scope}
	\end{tikzpicture}
	\caption{A part of the domain $\R\times (-b,b)$ of $v$ relevant for the boundary conditions in \eqref{BBBCs} for $\hat s=\frac{1}{\sqrt{2}}(e_1+e_2)$ and its image under $R_{-\theta}$ for $\theta=\frac{\pi}{4}$ embedded into the larger domain $\R\times (-B,B)$ of $w$ with boundary conditions \eqref{BBBC3} for $s = e_1$. The dashed lines describe the (deformed) slip directions.}\label{fig:changes}
\end{figure}
\end{proof}

\begin{lemma}[Change shear for $s\neq \pm e_1$]\label{lem:changegamma}
Let $s\in \Scal^1\setminus \{\pm e_1\}$, $\theta_1,\theta_2\in \T^1$, $\gamma_1,\gamma_2\in \R$ and $A,B>0$.
Moreover, let $w\in W^{1,\infty}(\R\times (-B,B);\R^2)$ such that $\nabla w\in \Mcal_s$ a.e.~and
\begin{align*}
	\nabla w(x_1,\cdot)=
	\begin{cases}
		\RS(\theta_1,\gamma_1;s)&~\text{for $x_1<-A$},\\
		\RS(\theta_2,\gamma_2;s)&~\text{for $x_1>A$.}
	\end{cases}
\end{align*}
Then for all $\tilde{\gamma}_1,\tilde{\gamma}_2\in\R$, there exists $v\in W_\loc^{1,\infty}(\R\times (-B,B);\R^2)$ such that $\nabla v\in \Mcal_s$ a.e.~and
\begin{align}\label{BBBCgammas}
	\nabla v(x_1,\cdot)=
	\begin{cases}
		\RS(\theta_1,\tilde{\gamma}_1;s)&~\text{for $x_1<-a$},\\
		\RS(\theta_2,\tilde{\gamma}_2;s)&~\text{for $x_1>a$},
	\end{cases}
	\quad \text{where}~a:=\frac{A+\abs{m_2}B}{\abs{m_1}}>A.
\end{align}
\end{lemma}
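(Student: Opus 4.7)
The plan is to construct $v$ piecewise by using a single rank-one interface of direction $s$ on each side of the strip to change the shear parameter from $\gamma_i$ to $\tilde\gamma_i$ without altering the rotation $\theta_i$, taking advantage of Lemma~\ref{lem:compatibility}(i): matrices of the form $\RS(\theta,\gamma;s)$ with the same $\theta$ are rank-one compatible with jump direction $R_\theta s\otimes m$, so an interface of direction $s$ (normal $m$) accommodates the change of shear with no change of rotation. The affine deformation $\RS(\theta_1,\tilde\gamma_1)x+c_L$ on the far left and $\RS(\theta_2,\tilde\gamma_2)x+c_R$ on the far right will be glued to (a translate of) $w$ in the middle along two such interface lines.

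First, I would verify the geometric placement. The natural choice on the left is the level set $\ell_L=\{x:\,m\cdot x=-\mathrm{sign}(m_1)\,A\}$: a direct computation using $a|m_1|=A+|m_2|B$ shows that $\ell_L$ passes through the corner $(-a,\sigma B)$ with an appropriate sign $\sigma\in\{\pm1\}$, so that the half-plane on one side of $\ell_L$ contains $\{x_1<-a\}\cap(\R\times(-B,B))$. Indeed, for any such point, $|m\cdot x|\le |m_1|\,|x_1|+|m_2|B<|m_1|a+|m_2|B\le\ldots$ with the right inequality collapsing to $A$ by the defining identity for $a$. Define $\ell_R$ symmetrically on the right via $m\cdot x=\mathrm{sign}(m_1)\,A$ through $(a,-\sigma B)$.

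Next, set $v:=\RS(\theta_1,\tilde\gamma_1)x+c_L$ on the outer-left half-plane determined by $\ell_L$, analogously on the right, and $v(x):=w(x)+c_M$ in the middle band $\{-A<\mathrm{sign}(m_1)m\cdot x<A\}\cap(\R\times(-B,B))$. For continuity across $\ell_L$, note that
\[
\RS(\theta_1,\tilde\gamma_1)x-\RS(\theta_1,\gamma_1)x=(\tilde\gamma_1-\gamma_1)(m\cdot x)R_{\theta_1}s,
\]
which is constant along the level set $\{m\cdot x=-\mathrm{sign}(m_1)A\}$; the constants $c_L,c_M$ can thus be matched (and similarly $c_R,c_M$ on the right). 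The rank-one compatibility of the jumps is immediate from Lemma~\ref{lem:compatibility}(i) on the portion of each $\ell_L,\ell_R$ contained in $\{|x_1|>A\}$, where $\nabla w$ equals the relevant $\RS(\theta_i,\gamma_i)$. Finally, checking that $\nabla v\in\Mcal_s$ a.e., that $v\in W^{1,\infty}_\loc$, and verifying the prescribed behavior $\nabla v=\RS(\theta_i,\tilde\gamma_i;s)$ for $|x_1|>a$ finishes the proof.

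The main obstacle I expect is handling the portion of each interface that unavoidably enters the band $\{|x_1|\le A\}$, where $w$ need not have the correct rotation $\theta_i$ along the interface and hence the gluing of $\RS(\theta_1,\tilde\gamma_1)$ to $\nabla w$ need not be rank-one compatible on that part. This requires preparing $w$ appropriately beforehand: using the existence of $w$, one constructs a modified function (still satisfying the hypothesized boundary conditions and $\nabla\in\Mcal_s$ a.e.) whose rotation $\theta$-field equals $\theta_1$ in a left-adjacent region covering the whole of $\ell_L\cap\{x_1>-A\}$, and symmetrically $\theta_2$ on the right. The specific value $a=(A+|m_2|B)/|m_1|$ is precisely the threshold making this geometric arrangement of $\ell_L$ and $\ell_R$, together with the existence of such a modified $w$, fit within the strip $\R\times(-B,B)$.
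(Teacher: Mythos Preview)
Your overall strategy---glue affine pieces with gradients $\RS(\theta_i,\tilde\gamma_i)$ to $w$ across two straight interfaces with normal $m$, invoking Lemma~\ref{lem:compatibility}(i)---is exactly the paper's approach. The problem is your choice of interface level. You place $\ell_L,\ell_R$ on $\{\sigma m\cdot x=\pm A\}$ (with $\sigma=\mathrm{sign}(m_1)$), and with this choice the interfaces can indeed protrude into $\{|x_1|\le A\}$; that is the obstacle you flag in your last paragraph. But it is self-inflicted, not ``unavoidable'': the paper simply pushes the interfaces further out, to $\{\sigma m\cdot x=\pm\mu\}$ with $\mu:=A+|m_2|B$. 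For any $x$ on $\{\sigma m\cdot x=\mu\}$ with $|x_2|<B$ one has $|m_1|x_1=\mu-\sigma m_2 x_2>\mu-|m_2|B=A$, hence $x_1>A/|m_1|\ge A$; symmetrically on the left. Thus both interfaces lie entirely inside $\{|x_1|>A\}$, where $\nabla w$ already equals the required constant $\RS(\theta_i,\gamma_i)$, the rank-one gluing is immediate, and no modification of $w$ is needed. Your proposed fix---forcing the rotation field of $w$ to be constant near the interfaces---is not justified by anything in the hypotheses and is unnecessary once the interfaces are placed correctly.

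Two smaller slips. First, in your geometric verification you write $|m\cdot x|\le |m_1|\,|x_1|+|m_2|B<|m_1|a+|m_2|B$ for $x_1<-a$; but $x_1<-a$ gives $|x_1|>a$, so the second inequality goes the wrong way. Second, noting that $\ell_L$ passes through the corner $(-a,\sigma B)$ controls only one endpoint of $\ell_L$ within the strip; it is the \emph{other} endpoint (at $x_2=-\sigma B$) that decides whether $\ell_L$ enters $\{|x_1|\le A\}$, and with your level it generally does.
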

\begin{proof}
Let $\sigma:=\frac{m_1}{|m_1|}\in \{-1,1\}$; this is a fixed sign chosen so that $\sigma m_1>0$.
We define $\mu>0$ and sets $S_i\subset \R\times (-B,B)$ by
\[
\begin{alignedat}{2}
	&\mu&&:=A+\abs{m_2}B>0\\
	&S_1&&:=\left\{ x\in \R^2 \,\left|\, x\cdot (\sigma m) \leq -\mu  \right.\right\},\\
	&S_2&&:=\left\{ x\in \R^2 \,\left|\, x\cdot (\sigma m) \geq \mu \right.\right\},\\
	&S_0&&:=\R^2\setminus (S_1\cup S_2).\\
\end{alignedat}
\]
As defined, $\Gamma_j:=\partial S_j\cap (\R\times (-B,B))$, $j=1,2$, are lines parallel to $s$.
Moreover, by the definition of $\mu$, $\Gamma_1$ and $\Gamma_2$ do not intersect
$(-A,A)\times (-B,B)$, only touching one corner while passing it on the left or right: 
For any $x_2\in (-B,B)$ and $x_1$ such that $x=(x_1,x_2)\in \Gamma_2$ (the case of $\Gamma_1$ is analogous), we have that
$x_1\sigma m_1=\mu-\sigma m_2x_2>\mu-\abs{m_2}B=A$.
In particular, $(-A,A)\times (-B,B)\subset S_0$.

Now let $v:=w$ on $S_0\cap (\R\times (-B,B))$, and continuously extend it to $\R\times (-B,B)$ by two affine functions 
on $S_1$ and $S_2$, respectively, with fixed gradients given by the two matrices of \eqref{BBBCgammas}.
Since $\partial S_0\cap [\R\times (-B,B)]=\Gamma_1\cup \Gamma_2$,
this gives a well defined function in $W^{1,\infty}(\R\times (-B,B);\R^2)$ with $\nabla v\in \Mcal_s$ a.e.. 
Here, recall that by Lemma~\ref{lem:compatibility} (i),
$\RS(\theta_j,\tilde{\gamma}_j)$ and $\RS(\theta_j,\gamma_j)$ 
are rank-one connected with normal $m$ and therefore compatible across the interfaces $\Gamma_j$.
Finally, we also have
\eqref{BBBCgammas} since $a=\sup \{|x\cdot e_1| : x\in S_0\}$.
\end{proof}

\begin{lemma}[Change angle globally]\label{lem:changetheta}
Let $s\in\Scal^1$, $\theta_1,\theta_2,\theta\in \T^1$, $\gamma_1,\gamma_2\in \R$ and $A\geq B>0$. 
Moreover, let $w\in W_\loc^{1,\infty}(\R\times (-B,B);\R^2)$ such that $\nabla w\in \Mcal_s$ a.e.~and
\begin{align*}
	\nabla w(x_1,\cdot)=\begin{cases} 
		\RS(\theta_1,\gamma_1)&~\text{for $x_1<-A$},\\
		\RS(\theta_2,\gamma_2)&~\text{for $x_1>A$.}
	\end{cases}
\end{align*}
Then, there exists $v\in W_\loc^{1,\infty}(\R\times (-B,B);\R^2)$ such that $\nabla v\in \Mcal_s$ a.e.,
\begin{align*}
	\nabla v(x_1,\cdot)=\begin{cases}
		\RS(\theta_1+\theta,\gamma_1)&~\text{for $x_1<-A$},\\
		\RS(\theta_2+\theta,\gamma_2)&~\text{for $x_1>A$.}
	\end{cases}
\end{align*}
\end{lemma}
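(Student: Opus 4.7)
The plan is a one-line construction: define
\[
v(x) := R_\theta\, w(x) \qquad \text{for } x \in \R \times (-B, B).
\]
That is, simply rotate the image of $w$ by angle $\theta$. Since left-multiplication by $R_\theta$ is a linear isometry of $\R^2$, the Sobolev regularity is preserved, so $v \in W^{1,\infty}_{\loc}(\R \times (-B,B); \R^2)$ follows immediately from the analogous property of $w$.

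The crucial step is to check that $\nabla v \in \Mcal_s$ a.e., i.e., that $\Mcal_s$ is invariant under left-multiplication by rotations. By the chain rule, $\nabla v(x) = R_\theta\, \nabla w(x)$ a.e. Given any $F = R(\Id + \gamma\, s\otimes m) \in \Mcal_s$ with $R \in \SO(2)$ and $\gamma \in \R$, we have $R_\theta F = (R_\theta R)(\Id + \gamma\, s\otimes m) \in \Mcal_s$ because $R_\theta R \in \SO(2)$. Hence $\nabla v \in \Mcal_s$ a.e. as required.

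For the boundary conditions, I observe that for $x_1 < -A$, $\nabla w(x) = \RS(\theta_1, \gamma_1; s) = R_{\theta_1}(\Id + \gamma_1\, s\otimes m)$, and therefore
\[
\nabla v(x) = R_\theta R_{\theta_1}(\Id + \gamma_1\, s\otimes m) = R_{\theta_1 + \theta}(\Id + \gamma_1\, s\otimes m) = \RS(\theta_1 + \theta, \gamma_1; s),
\]
using that $\SO(2)$ is abelian. The analogous computation handles the region $x_1 > A$, giving the claim.

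There is no real obstacle here: the argument works for every $s \in \Scal^1$, preserves the strip $\R \times (-B, B)$ exactly (no shrinking is needed, in contrast to Lemma~\ref{lem:changes}), and does not actually use the hypothesis $A \geq B$, which may have been included for uniformity with surrounding results. If continuity across regions is later required when this building block is glued with others, one may add a constant to $v$, which does not affect $\nabla v$.
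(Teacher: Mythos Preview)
Your proof is correct and is exactly the paper's approach: the paper's entire proof is the sentence ``The function $v:=R_\theta w$ has the asserted properties,'' and you have simply spelled out the verifications. Your remark that the hypothesis $A\geq B$ is not used is also accurate.
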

\begin{proof}
The function $v:=R_\theta w$ has the asserted properties.
\end{proof}

As we have seen in Lemma \ref{lem:changes}, we can only transform maps whose gradients lie in $\Mcal_{s}$ for some $s\in\Scal^1$ to those with gradients in $\Mcal_{\hat{s}}$ for $\hat{s}\in\Scal^1$ if $s$ and $\hat{s}$ form an angle of at most $\frac{\pi}{4}$.
This is why it is necessary and sufficient to explicitly construct building blocks only for the slip directions $s=\pm e_1$ and $s=\pm e_2$.

We first present a piecewise affine construction that allows a (small) change of angle on one side of the domain while keeping the shear fixed on both sides.

\begin{lemma}[Changing angles for $s\in\{\pm e_1,\pm e_2\}$]\label{lem:angle_e1e2}
Let $B>0$ and $s\in\{\pm e_1,\pm e_2\}$. There exists $\theta_{\max}\in (0,\pi]$ (independent of $B$) such that for every $|\theta|\leq \theta_{\max}$ there is $w\in W_\loc^{1,\infty}(\R\times(-B,B);\R^2)$ satisfying $\nabla w\in \Mcal_{s}$ almost everywhere and
\begin{align}\label{w_construction}
	\nabla w(x_1,\cdot)=\begin{cases}
		\Id &\text{ for }x_1<-\tfrac{1}{2}B,\\
		 R_{\theta} &\text{ for } x_1>\tfrac{1}{2}B.		
	\end{cases}
\end{align}
\end{lemma}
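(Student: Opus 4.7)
The plan is to construct $w$ explicitly as a piecewise affine map on $\R\times(-B,B)$ with at most four affine pieces, whose gradients lie in $\Mcal_s$ and which are glued along rank-one compatible interfaces provided by Lemma~\ref{lem:compatibility}. I would split into the two cases $s\in\{\pm e_2\}$ and $s\in\{\pm e_1\}$; within each, the two signs reduce to one by a reflection, so below I describe only $s=e_2$ and $s=e_1$.

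\smallskip
\textbf{Case $s=e_2$ ($m=-e_1$).} By Lemma~\ref{lem:compatibility}(i), any two elements of $\Mcal_{e_2}$ sharing the same angle are rank-one compatible with normal $m=-e_1$, i.e.\ across vertical lines. I would accordingly choose four pieces: $R_1=\{x_1<-B/2\}$ and $R_4=\{x_1>B/2\}$ carrying the gradients $\Id$ and $R_\theta$, plus two middle pieces $R_2,R_3\subset(-B/2,B/2)\times(-B,B)$ with gradients $\RS(0,\gamma_1)$ and $\RS(\theta,\gamma_2)$, separated by a single slanted line $\ell$. The interfaces at $x_1=\pm B/2$ are then vertical same-angle jumps (automatic); across $\ell$, Lemma~\ref{lem:compatibility}(ii) forces $\gamma_2-\gamma_1=2\tan(\theta/2)$ and fixes the normal to $n=2\,e_2-(\gamma_1+\gamma_2)\,e_1$. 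Taking $\gamma_1+\gamma_2=-2C$ for a parameter $C>2$ independent of $\theta$ (e.g.\ $\gamma_1=-C-\tan(\theta/2)$, $\gamma_2=-C+\tan(\theta/2)$), the normal becomes $n=2\,e_2+2C\,e_1$, so $\ell$ has slope $-C$ and its $x_1$-range is contained in $(-B/C,B/C)\subset(-B/2,B/2)$ for all $x_2\in(-B,B)$. Continuity of $w$ then determines the four affine pieces uniquely up to a global translation, realising \eqref{w_construction}. Already $C=3$ yields $\theta_{\max}\geq\pi/2$, and letting $C=3+|\tan(\theta/2)|$ gets arbitrarily close to $\theta_{\max}=\pi$.

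\smallskip
\textbf{Case $s=e_1$ ($m=e_2$).} Now same-angle jumps yield \emph{horizontal} interfaces, which cannot carry the required $x_1$-transition; every interface must come from Lemma~\ref{lem:compatibility}(ii), whose normals have the form $2\,e_1+(\gamma_i+\gamma_{i+1})\,e_2$. I would use a symmetric four-piece ansatz with angles $(0,\phi,\theta-\phi,\theta)$ and shears $(0,\gamma,-\gamma,0)$, where $\gamma:=2\tan(\phi/2)$ is forced by the outer two compatibility conditions. The middle interface is then exactly vertical (since $\gamma+(-\gamma)=0$), while the outer two have normals $(2,\pm\gamma)$, nearly vertical for $|\gamma|$ small. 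Compatibility at the middle interface reduces to the scalar equation
\[
	-2\tan(\phi/2) \;=\; \tan(\theta/2-\phi).
\]
A Taylor expansion near $(\phi,\theta)=(0,0)$ gives $\theta/2=\phi^3/4+O(\phi^5)$; comparing signs of the residual at, say, $\phi=\theta^{1/2}$ and $\phi=\theta^{1/4}$, the intermediate value theorem provides a continuous branch $\phi(\theta)\sim \mathrm{sgn}(\theta)\,|2\theta|^{1/3}$ of non-trivial solutions for $|\theta|$ in a neighborhood of $0$. For $\theta_{\max}>0$ sufficiently small (and independent of $B$, since each outer interface's $x_1$-extent scales like $B\tan(\phi/2)\sim B\phi$), the three interfaces remain steep enough to fit inside $(-B/2,B/2)\times(-B,B)$, and continuity again determines the affine pieces.

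\smallskip
\textbf{Main obstacle.} The hard case will be $s=\pm e_1$: because the slip direction is aligned with the long axis of the strip, the convenient vertical same-angle interfaces of the previous case are unavailable, and the scalar compatibility above has a \emph{degenerate linearisation} at $(\phi,\theta)=(0,0)$ --- the derivative in $\phi$ vanishes there --- so the standard implicit function theorem fails and one must work with a cubic normal form. This algebraic rigidity is exactly what keeps $\theta_{\max}$ small (though still strictly positive and independent of $B$) in this case, and it already foreshadows the obstructions analysed later in Section~\ref{sec:gap}, where smooth bending in this regime is shown to cost extra energy proportional to the thickness.
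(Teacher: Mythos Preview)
Your plan is correct and, for $s=\pm e_1$, is in fact the \emph{same} four-piece construction the paper uses: with $\phi=2\varphi$, your angle/shear pattern $(0,\phi,\theta-\phi,\theta)$, $(0,\gamma,-\gamma,0)$, $\gamma=2\tan(\phi/2)$ is exactly the paper's $(0,2\varphi,\,2\varphi-2\arctan(2\tan\varphi),\,\omega_4)$, $(0,2\tan\varphi,-2\tan\varphi,0)$. The one substantive difference is how the middle compatibility is resolved. You treat $-2\tan(\phi/2)=\tan(\theta/2-\phi)$ as an equation for $\phi$ given $\theta$, notice the degenerate linearisation, and invoke a cubic normal form plus IVT. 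The paper instead takes $\varphi$ (equivalently, the interface slope) as the free parameter and writes the resulting angle \emph{explicitly} as $\theta=\omega_4(\varphi)=4\varphi-2\arctan(2\tan\varphi)$, which is continuous, increasing, and vanishes at $\varphi=0$; inverting on $[0,\arctan(1/4)]$ is then immediate. This sidesteps the ``degenerate IFT'' obstacle you flag entirely and gives a concrete $\theta_{\max}=\omega_4(\arctan\tfrac14)$. Your obstacle paragraph therefore overstates the difficulty. One small gap in your sketch: you should specify that the three interfaces share a common vertex on the lower boundary $x_2=-B$ (or arrange them as three disjoint steep stripes); otherwise the four pieces do not tile the strip without creating an extra, incompatible $\Id/R_\theta$ interface.

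For $s=\pm e_2$ your approach is genuinely different from the paper's and arguably simpler. The paper reuses the same fan-shaped partition as in the $e_1$ case (two steep slanted lines and the vertical centre line, now with shears $\pm 2\cot\varphi$), whereas you exploit that same-angle jumps in $\Mcal_{e_2}$ have normal $m=-e_1$ and hence give \emph{vertical} interfaces for free. Your stripe construction (two vertical lines at $x_1=\pm B/2$ and one slanted line of slope $-C$) is cleaner here, needs only three interfaces with a single type-(ii) compatibility to check, and delivers a large $\theta_{\max}$ directly. The paper's uniform treatment has the advantage of reusing one geometric picture; yours has the advantage of making the $e_2$ case essentially trivial.
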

\begin{proof}
	The case $\theta = 0$ is trivial.
	In the following, we will split the domain $E=\R\times(-B,B)$ into four pieces:
	\begin{align}\label{partition}
		\begin{split}
			E_1 &= \{(x_1,x_2)\in E: x_2\leq -x_1\cot\ffi,\, x_1\leq 0\},\quad E_2=\{(x_1,x_2)\in E : -x_1\cot \ffi <x_2,\, x_1\leq 0\}\\
			E_3 &= \{(x_1,x_2)\in E: x_2\geq x_1\cot \ffi,\, x_1>0\},\quad E_4=\{(x_1,x_2)\in E : x_2< x_1\cot \ffi,\, x_1>0\},
		\end{split}
	\end{align}
	where $\ffi\in(0,\arctan(\frac{1}{4})]$. The upper bound comes from $\tan \ffi \leq \frac{\frac{B}{2}}{2B} = \frac{1}{4}$, see the boundary conditions in \eqref{w_construction}.
	
	\textit{Part 1: The case $s=\pm e_1$.} 
	Let us first assume that $\theta>0$. 
	Considering the partition \eqref{partition}, we define a piecewise affine function $w_\ffi: E \to \R^2$ with gradients
	\begin{align}\label{w_ffi}
		\nabla w_\ffi(x) =
			\begin{cases}
				\Id &\text{ if } x\in E_1,\\
				\RS(\omega_2,2\tan\ffi;\pm e_1) &\text{ if } x\in E_2,\\
				\RS(\omega_3,-2\tan\ffi;\pm e_1) &\text{ if } x\in E_3,\\
				R_{\omega_4} &\text{ if } x\in E_4,
			\end{cases}\quad x\in E
	\end{align}
	and
	\begin{align*}
		\omega_2 &= 2\arctan(\tan\ffi) = 2\ffi, \\
		\omega_3 &= -2\arctan(2\tan \ffi) + \omega_2 = -2\arctan(2\tan\ffi) + 2\ffi,\\
		\omega_4 &= 2\arctan \tan \ffi + \omega_3 = 4 \ffi - 2\arctan(2\tan\ffi),
	\end{align*}
	see Figure \ref{fig:bend1}.
	Lemma \ref{lem:compatibility} $ii)$ immediately yields that this deformation $w_\ffi$ is continuous since the rank-one compatibilities along all appearing interfaces are satisfied.
	Moreover, we observe that $\omega_4$ is continuous, increasing in $\ffi$, and satisfies
	\begin{align*}
		\lim_{\ffi\to 0} \omega(\ffi) = 0\qand \lim_{\ffi\to \frac{\pi}{2}} \omega_4(\ffi) = 4\frac{\pi}{2}- 2\frac{\pi}{2} = \pi.
	\end{align*}
	We now set $\theta_{\max} = \omega_4(\arctan(\frac{1}{4}))$ and find for any given $\theta\in (0,\theta_{\max}]$ some $\bar{\ffi}\in (0,\arctan(\frac{1}{4})]$ such that $\theta = \omega_4(\bar{\ffi})$.
	The desired map $w$ is then given by $w_{\bar{\ffi}}$ as in \eqref{w_ffi}, see also Figure \ref{fig:bend1}.
	
	If $\theta<0$, then we mirror the previous construction.
	Precisely, we set $A=e_1\otimes e_1 - e_2\otimes e_2$ and define $w(x) = A^T w_{\bar{\ffi}}(Ax)$ for every $x\in E$. 
	We then observe that $w$ satisfies the desired boundary values and that $\nabla w(x)\in\Mcal_{e_1}$ since
	\begin{align*}
		A^T\RS(\theta,\gamma;\pm e_1)A= \RS(-\theta,-\gamma;\pm e_1)
	\end{align*}
	for any $\theta,\gamma\in \R$.
	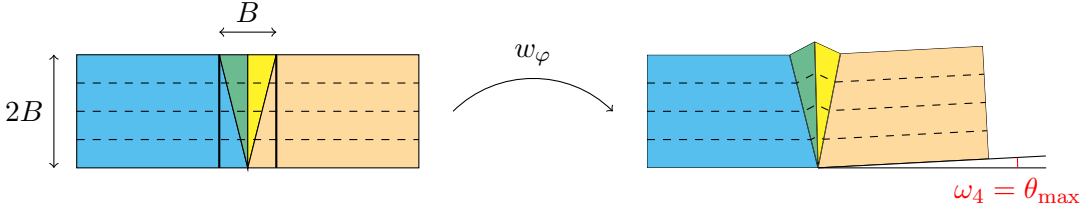
\begin{figure}
		\centering
		\begin{tikzpicture}[scale=1.5]
			\def\angle{14.04}
			
			\pgfmathsetmacro\gammab{2*tan(\angle)}
			\pgfmathsetmacro\gammac{2*tan(0)-\gammab}
			\pgfmathsetmacro\gammad{-2*tan(\angle)-\gammac}
		
			\pgfmathsetmacro\thetab{2*\angle}
			\pgfmathsetmacro\thetac{-2*atan(2*tan(\angle)) + 2*\angle}
			\pgfmathsetmacro\thetad{4*\angle -2*atan(2*tan(\angle))}

			\draw [fill=Cerulean!60!white] (-1.5,0) --++ (1.5,0) --++ ($(0,1) - ({tan(\angle)},0)$) -- (-1.5,1) -- cycle;
			\draw [fill=ForestGreen!60!white] (0,0) -- ($(0,1) - ({tan(\angle)},0)$) -- (0,1) -- cycle;
			\draw [fill=yellow!90!white] (0,0) -- (0,1) -- ($(0,1) + ({tan(\angle)},0)$) -- cycle;
			\draw [fill=Dandelion!50!white] (0,0) -- ($(0,1) + ({tan(\angle)},0)$) -- (1.5,1) --++ (0,-1) -- cycle;
			\foreach \x in {1,...,3}
		    	\draw[dashed] (-1.5,0.25*\x) --++ (3,0);
			\draw [<->] (-1.7,0) --++ (0,.5)  node [anchor = east] {$2B$} --++(0,.5);
			\draw [<->] (-.25,1.2) --++ (.25,0) node [anchor = south] {$B$} --++ (.25,0);
			\draw [thick] (-.25,0) --++ (0,1);
			\draw [thick] (.25,0) --++ (0,1);
		
			\draw [->] (1.8,.5) to [out=45,in=135] (3.2,.5);
			\draw (2.5,1) node {$w_\ffi$};		
			
			\begin{scope}[shift={(5,0)}]
				\begin{scope}
					\clip (-1.5,0) --++ (1.5,0) --++ ($(0,1) - ({tan(\angle)},0)$) -- (-1.5,1) -- cycle;
					\draw [fill=Cerulean!60!white] (-1.5,0) --++ (1.5,0) --++ ($(0,1) - ({tan(\angle)},0)$) -- (-1.5,1) -- cycle;
					\foreach \x in {1,...,3}
		    			\draw[dashed] (-1.5,0.25*\x) --++ (3,0);
				\end{scope}	
				
				\begin{scope}[ rotate={\thetab}, xslant=\gammab]
					\clip (0,0) -- ($(0,1) - ({tan(\angle)},0)$) -- (0,1) -- cycle;
					\draw [fill=ForestGreen!60!white] (0,0) -- ($(0,1) - ({tan(\angle)},0)$) -- (0,1) -- cycle;
					\foreach \x in {1,...,3}
		    			\draw[dashed] (-1.5,0.25*\x) --++ (3,0);
				\end{scope}
				
				\begin{scope}[ rotate=\thetac, xslant=\gammac]
					\clip (0,0) -- (0,1) -- ($(0,1) + ({tan(\angle)},0)$) -- cycle;
					\draw [fill=yellow!90!white] (0,0) -- (0,1) -- ($(0,1) + ({tan(\angle)},0)$) -- cycle;
					\foreach \x in {1,...,3}
		    			\draw[dashed] (-1.5,0.25*\x) --++ (3,0);
				\end{scope}
				
				\begin{scope}[ rotate=\thetad, xslant=\gammad]
					\clip (0,0) -- ($(0,1) + ({tan(\angle)},0)$) -- (1.5,1) --++ (0,-1) -- cycle;
					\draw [fill=Dandelion!50!white] (0,0) -- ($(0,1) + ({tan(\angle)},0)$) -- (1.5,1) --++ (0,-1) -- cycle;
					\foreach \x in {1,...,3}
		    			\draw[dashed] (-1.5,0.25*\x) --++ (3,0);
				\end{scope}
				\draw (0,0) -- (2,0);
				\draw (0,0) -- (3.015:2);
				\draw [red,domain=0:3.015] plot ({1.75*cos(\x)}, {1.75*sin(\x)});
				\draw (1.75,0) node [red, anchor=north] {$\omega_4=\theta_{\max}$};
			\end{scope}
		\end{tikzpicture}
		\caption{On the left: a part of the reference configuration $\R\times(-B,B)$ partitioned into the four colored subsets $E_1, E_2, E_3, E_4$ in \eqref{partition} for the maximal choice $\ffi = \arctan(\frac{1}{4})$; the dashed lines indicate the slip direction $s=e_1$. On the right: The image under the continuous piecewise affine map $w_\varphi$ as in \eqref{w_ffi}.}\label{fig:bend1}
	\end{figure}
	
	\textit{Part 2: The case $s=\pm e_2$.}	
	We start again with $\theta>0$.
	In light of Lemma \ref{lem:compatibility} $ii)$, we find that the piecewise affine function $w_\ffi: E \to \R^2$ with gradients
	\begin{align}\label{w_ffi_e_2}
		\nabla w_\ffi(x) =
			\begin{cases}
				\Id &\text{ if } x\in E_1,\\
				\RS(\omega_2,2\cot \ffi;\pm e_2) &\text{ if } x\in E_2,\\
				\RS(\omega_3,-2\cot \ffi;\pm e_2) &\text{ if } x\in E_3,\\
				R_{\omega_4} &\text{ if } x\in E_4,
			\end{cases}\quad x\in E
	\end{align}
	and
	\begin{align*}
		\omega_2 = 2\arctan(-\cot\ffi) =  2\ffi - \pi,\quad \omega_3= \omega_2, \quad \omega_4 = \arctan(-\cot \ffi) + \omega_3 = 4 \ffi - 2\pi
	\end{align*}
	is continuous. Note that we may replace the expression for $\omega_4$ by $\omega_4 = 4\ffi$ since $R_{4\ffi - 2\pi} = R_{4\ffi}$.
	Similarly to before, we set $\theta_{\max} = \omega_4(\arctan(\frac{1}{4})) = 4\arctan(\frac{1}{4})$ and set $\bar{\ffi} = \frac{1}{4}\theta \in (0,\arctan(\frac{1}{4})]$ for $\theta\in (0,\theta_{\max}]$.
	The desired map $w$ is then given by $w_{\bar{\ffi}}$ as in \eqref{w_ffi_e_2}, see also Figure \ref{fig:bend2}.
	If $\theta<0$, then we mirror this construction exactly as in Part 1 of this proof.
	
	\begin{figure}
		\centering
		\begin{tikzpicture}[scale=1.5]
			\def\angle{14.04}	
			\def\tb{0}	
			
			\pgfmathsetmacro\gammab{2*cot(\angle)}
			\pgfmathsetmacro\gammac{-2*cot(\angle)}
			
			\pgfmathsetmacro\thetab{2*\angle- 180}
			\pgfmathsetmacro\thetac{2*\angle - 180}
			\pgfmathsetmacro\thetad{4*\angle}

			\draw [fill=Cerulean!60!white] (-1.5,0) --++ (1.5,0) --++ ($(0,1) - ({tan(\angle)},0)$) -- (-1.5,1) -- cycle;
			\draw [fill=ForestGreen!60!white] (0,0) -- ($(0,1) - ({tan(\angle)},0)$) -- (0,1) -- cycle;
			\draw [fill=yellow!90!white] (0,0) -- (0,1) -- ($(0,1) + ({tan(\angle)},0)$) -- cycle;
			\draw [fill=Dandelion!50!white] (0,0) -- ($(0,1) + ({tan(\angle)},0)$) -- (1.5,1) --++ (0,-1) -- cycle;
			\begin{scope}
			\clip(-1.5,0)--++(3,0)--++(0,1)--++(-3,0)--cycle;			
				\foreach \x in {-7,...,7}
			    	\draw[dashed] (0.4*\x,0) --++ (0,1);
			\end{scope}
			\draw [<->] (-1.7,0) --++ (0,.5)  node [anchor = east] {$2B$} --++(0,.5);
			\draw [<->] (-.25,1.2) --++ (.25,0) node [anchor = south] {$B$} --++ (.25,0);
			\draw [thick] (-.25,0) --++ (0,1);
			\draw [thick] (.25,0) --++ (0,1);
		
			\draw [->] (1.8,.5) to [out=45,in=135] (3.2,.5);
			\draw (2.5,1) node {$w_\ffi$};		
			
			\begin{scope}[shift={(5,0)}]
				\begin{scope}[rotate=\thetad]
					\clip (0,0) -- ($(0,1) + ({tan(\angle)},0)$) -- (1.5,1) --++ (0,-1) -- cycle;
					\draw [fill=Dandelion!50!white] (0,0) -- ($(0,1) + ({tan(\angle)},0)$) -- (1.5,1) --++ (0,-1) -- cycle;
					\foreach \x in {-7,...,7}
			    		\draw[dashed] (0.4*\x,0) --++ (0,1);
				\end{scope}
				
				\begin{scope}[rotate={\thetab},yslant=\gammab]
					\clip (0,0) -- ($(0,1) - ({tan(\angle)},0)$) -- (0,1) -- cycle;
					\draw [fill=ForestGreen!60!white] (0,0) -- ($(0,1) - ({tan(\angle)},0)$) -- (0,1) -- cycle;
			    	\foreach \x in {-7,...,7}
			    		\draw[dashed] (0.4*\x,0) --++ (0,1);
				\end{scope}					
				
				\begin{scope}[rotate=\thetac, yslant=\gammac]
					\clip (0,0) -- (0,1) -- ($(0,1) + ({tan(\angle)},0)$) -- cycle;
					\draw [fill=yellow!90!white] (0,0) -- (0,1) -- ($(0,1) + ({tan(\angle)},0)$) -- cycle;
					\foreach \x in {-7,...,7}
			    		\draw[dashed] (0.4*\x,0) --++ (0,1);
				\end{scope}
				
				\begin{scope}
					\clip (-1.5,0) --++ (1.5,0) --++ ($(0,1) - ({tan(\angle)},0)$) -- (-1.5,1) -- cycle;
					\draw [fill=Cerulean!60!white] (-1.5,0) --++ (1.5,0) --++ ($(0,1) - ({tan(\angle)},0)$) -- (-1.5,1) -- cycle;
					\foreach \x in {-7,...,7}
			    		\draw[dashed] (0.4*\x,0) --++ (0,1);
				\end{scope}
				\draw (0,0) -- (1,0);
				\draw [red,domain=0:56.14] plot ({.75*cos(\x)}, {.75*sin(\x)});
				\draw ($({.75*cos(28.07)}, {.75*sin(28.07)})$) node [red, anchor=west] {$\omega_4=\theta_{\max}$};

				\clip (-1.5,0) --++ (3,0) --++ (0,1) --++ (-3,0) -- cycle;
				\begin{scope}[rotate=\thetad]
					\clip (0,0) -- ($(0,1) + ({tan(\angle)},0)$) -- (1.5,1) --++ (0,-1) -- cycle;
					\draw [fill=Dandelion!50!white,fill opacity=0.5] (0,0) -- ($(0,1) + ({tan(\angle)},0)$) -- (1.5,1) --++ (0,-1) -- cycle;
					\foreach \x in {-7,...,7}
			    		\draw[dashed] (0.4*\x,0) --++ (0,1);
				\end{scope}
				
				\begin{scope}[rotate=\thetac, yslant=\gammac]
					\clip (0,0) -- (0,1) -- ($(0,1) + ({tan(\angle)},0)$) -- cycle;
					\draw [fill=yellow!90!white,fill opacity=0.5] (0,0) -- (0,1) -- ($(0,1) + ({tan(\angle)},0)$) -- cycle;
					\foreach \x in {-7,...,7}
			    		\draw[dashed] (0.4*\x,0) --++ (0,1);
				\end{scope}
			\end{scope}	
		\end{tikzpicture}
		\caption{On the left: A part of the reference configuration $\R\times(-B,B)$ partitioned into the four colored subsets $E_1, E_2, E_3, E_4$ in \eqref{partition} for the maximal choice $\ffi = \arctan(\frac{1}{4})$; the dashed lines indicate the slip direction $s=e_2$. On the right: The image under the continuous piecewise affine map $w_\ffi$ as in \eqref{w_ffi_e_2}.}\label{fig:bend2}
	\end{figure}
\end{proof}

Now that we established how to ensure an adjustment in the angle while keeping the shear parameter for $s\in \{\pm e_1, \pm e_2\}$, it is time to address all remaining cases.
As we discussed earlier, this will be a direct consequence of Lemma \ref{lem:changes}.

\begin{lemma}[Changing angles for any slip direction]\label{lem:angle}
Let $B>0$ and $s\in\Scal^1$.
For every $|\theta|\leq \theta_{\max}$ (with $\theta_{\max}$ as in Lemma \ref{lem:angle_e1e2}) there exists a function $w\in W_\loc^{1,\infty}(\R\times (-B,B);\R^2)$, such that $\nabla w\in \Mcal_{s}$ a.e., and
\begin{align*}
	\nabla w(x_1,\cdot)=\begin{cases}
		\Id &\text{ for $x_1<-7 B,$}\\
		R_\theta &\text{ for $x_1>7 B$.}
	\end{cases}
\end{align*}
\end{lemma}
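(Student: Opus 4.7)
The plan is to reduce this lemma to Lemma~\ref{lem:angle_e1e2} by conjugating with the global slip-change procedure of Lemma~\ref{lem:changes}. The key observation is that, for any $s\in\Scal^1$, one can find an axis direction $s_\star\in\{\pm e_1,\pm e_2\}$ with $s_\star\cdot s\geq 2^{-1/2}$: writing $s=(\cos\alpha,\sin\alpha)$, at least one of $|\cos\alpha|$ and $|\sin\alpha|$ is $\geq 1/\sqrt 2$, which is precisely the compatibility hypothesis of Lemma~\ref{lem:changes}.

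First I would apply Lemma~\ref{lem:angle_e1e2} with slip direction $s_\star$ and half-thickness $8B$ (in place of $B$), obtaining a function $\tilde w\in W^{1,\infty}_{\loc}(\R\times(-8B,8B);\R^2)$ with $\nabla\tilde w\in\Mcal_{s_\star}$ a.e., and with boundary data $\nabla\tilde w(x_1,\cdot)=\Id$ for $x_1<-4B$ and $\nabla\tilde w(x_1,\cdot)=R_\theta$ for $x_1>4B$, valid whenever $|\theta|\le \theta_{\max}$.

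Next I would feed $\tilde w$ into Lemma~\ref{lem:changes}, playing the roles $s\leadsto s_\star$ (input direction) and $\hat s\leadsto s$ (target direction), with half-thickness $8B$. The condition $s_\star\cdot s\geq 2^{-1/2}$ is met, and the lemma produces $w\in W^{1,\infty}_{\loc}(\R\times(-B,B);\R^2)$ with $\nabla w\in\Mcal_s$ a.e.\ and boundary data prescribed at $x_1=\pm\tfrac{7}{8}\cdot 8B=\pm 7B$. Because $\RS(0,0;s)=\Id$ and $\RS(\theta,0;s)=R_\theta$ regardless of $s$, the boundary values given by~\eqref{BBBCs} collapse exactly to $\Id$ for $x_1<-7B$ and $R_\theta$ for $x_1>7B$, as required.

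The only delicacy is the bookkeeping of scales: Lemma~\ref{lem:changes} contracts the $x_2$-domain by a factor of $8$ and pushes the boundary data inward to $\pm 7/8$ of the original boundary location, so starting Lemma~\ref{lem:angle_e1e2} with thickness $8B$ and boundary data at $\pm 4B$ lands on the target thickness $B$ with boundary data at $\pm 7B$. No genuine obstacle is expected, because the $\pi/4$ compatibility built into Lemma~\ref{lem:changes} is precisely what the best axis approximation of a unit vector guarantees, and the constant $\theta_{\max}$ can be inherited verbatim from Lemma~\ref{lem:angle_e1e2}.
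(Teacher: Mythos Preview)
Your proposal is correct and follows essentially the same approach as the paper: apply Lemma~\ref{lem:angle_e1e2} at the enlarged scale $8B$ for an axis direction $s_\star\in\{\pm e_1,\pm e_2\}$ closest to $s$, then use Lemma~\ref{lem:changes} to rotate the slip direction from $s_\star$ to $s$, with the scale bookkeeping exactly as you describe. Your observation that $\RS(0,0;\hat s)=\Id$ and $\RS(\theta,0;\hat s)=R_\theta$ independently of $\hat s$ is precisely what makes the boundary data survive the slip-change unchanged.
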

\begin{proof}
	The proof technically distinguishes between the two scenarios $s\cdot e_1 \geq \frac{1}{\sqrt{2}}$ and $s\cdot e_2 > \frac{1}{\sqrt{2}}$.
	
	First, we set $\tilde B=8B$ and apply Lemma \ref{lem:angle_e1e2} for $B=\tilde{B}$ to obtain $\tilde{w}\in W^{1,\infty}(\R\times (-8B,8B);\R^2)$ that satisfies $\tilde{w}_s\in\Mcal_{e_2}$ a.e.~and
	\begin{align*}
		\nabla w(x_1,\cdot)= \begin{cases}
			\Id &\text{ for }x_1<-\frac{1}{2}\tilde B = -4B,\\ 
			R_{\theta} &\text{ for } x_1>\frac{1}{2}\tilde B =4B.
		\end{cases}
	\end{align*}
	The final ingredient is Lemma \ref{lem:changes} applied to $w=\tilde{w}$ and $B=\tilde B$: If $s\cdot e_2 > \frac{1}{\sqrt{2}}$ we switch from $s=e_1$ to $\hat{s}=s$, otherwise we replace $e_2$ by $e_1$.
\end{proof}

Finally, we combine all previous building blocks to establish a Lemma about the construction of recovery sequences for Theorem \ref{theo:hard_constraint} on a thin strip with height $2h$.

\begin{lemma}[Change rotated shears]\label{lem:switch_rotated_shear}
	For any given $s\neq\pm e_1$, $\theta_1,\theta_2\in \T^1$, $\gamma_1,\gamma_2\in \R$, and $h>0$, there exist $r>0$ (independent of $h$) and a map $v\in W_\loc^{1,\infty}\big(\R\times (-h,h));\R^2\big)$ such that $\nabla v\in \Mcal_s$ and
	\begin{align*}
		\nabla v(y_1,\cdot)=\begin{cases}
			 \RS(\theta_1,\gamma_1) &\text{ for }y_1< -rh,\\
			 \RS(\theta_2,\gamma_2) &\text{ for }y_1>rh.
		\end{cases}
	\end{align*}
	Moreover, it holds that
	\begin{align}\label{Linfty_norm}
		\int_{-h}^h\int_{-rh}^{rh} |\nabla v|^2 \dd y \leq Ch^2
	\end{align}
	for a constant $C>0$ independent of $h$.
	
	If $s=\pm e_1$, then the same holds true if $\gamma_1=\gamma_2=0$.
\end{lemma}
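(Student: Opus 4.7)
The plan is to chain together the building blocks of this section. I will first construct a preliminary map whose boundary gradients are the pure rotations $R_{\theta_1}$ and $R_{\theta_2}$ (i.e., with zero shear on both sides) by iterated application of Lemma~\ref{lem:angle}. In the case $s\neq \pm e_1$, I will then invoke Lemma~\ref{lem:changegamma} to adjust the shear parameters from $0$ to the prescribed $\gamma_1,\gamma_2$. For $s=\pm e_1$ with $\gamma_1=\gamma_2=0$, the second step is unnecessary and the first step already yields the desired map.

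For the first step, since Lemma~\ref{lem:angle} only permits an angle change of magnitude at most $\theta_{\max}$, I choose intermediate angles $\alpha_0=\theta_1,\alpha_1,\ldots,\alpha_N=\theta_2$ in $\T^1$ with $|\alpha_k-\alpha_{k-1}|\leq \theta_{\max}$; the integer $N$ is bounded by $\lceil \pi/\theta_{\max}\rceil$, independently of $h$. For each $k$, Lemma~\ref{lem:angle} applied with $B=h$ yields $w_k\in W^{1,\infty}_{\loc}(\R\times(-h,h);\R^2)$ with $\nabla w_k\in\Mcal_s$ a.e., $\nabla w_k=\Id$ for $y_1<-7h$, and $\nabla w_k=R_{\alpha_k-\alpha_{k-1}}$ for $y_1>7h$. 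Set $v_k(y):=R_{\alpha_{k-1}}w_k(y-c_k e_1)+d_k$ with $c_k:=(14k-7)h$, so that $\nabla v_k=R_{\alpha_{k-1}}$ for $y_1<14(k-1)h$ and $\nabla v_k=R_{\alpha_k}$ for $y_1>14kh$. Taking $w$ to equal $v_k$ on $(14(k-1)h,14kh)\times(-h,h)$, extended affinely with gradients $R_{\theta_1}$ for $y_1\leq 0$ and $R_{\theta_2}$ for $y_1\geq 14Nh$, and choosing the additive constants $d_k$ inductively so that consecutive pieces match on the interfaces $y_1=14kh$ (where both sides are affine with the common gradient $R_{\alpha_k}$), produces a continuous function $w\in W^{1,\infty}_{\loc}(\R\times(-h,h);\R^2)$ with $\nabla w\in\Mcal_s$ a.e.

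For the second step (when $s\neq\pm e_1$), I apply Lemma~\ref{lem:changegamma} to $w$ with $A:=7Nh$ and $B:=h$, obtaining $v\in W^{1,\infty}_{\loc}(\R\times(-h,h);\R^2)$ with $\nabla v\in\Mcal_s$ a.e.~and the prescribed boundary gradients $\RS(\theta_j,\gamma_j)$; the transition region has half-width $a=(7N+|m_2|)h/|m_1|$, so $r:=(7N+|m_2|)/|m_1|$ is independent of $h$. For the energy bound, I observe that $\nabla v$ takes values in a bounded subset of $\Mcal_s$: in Step~1 the shears appearing in Lemma~\ref{lem:angle_e1e2} are bounded by a constant depending on $s$ and on the step sizes $|\alpha_k-\alpha_{k-1}|$, and Lemmas~\ref{lem:changes} and~\ref{lem:angle} preserve this bound; in Step~2 the two new affine pieces carry the fixed gradients $\RS(\theta_j,\gamma_j)$. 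Hence $|\nabla v|\leq C$ on $(-rh,rh)\times(-h,h)$ for some $C$ independent of $h$, and integration gives $\int_{-h}^{h}\int_{-rh}^{rh}|\nabla v|^2\dd y\leq Ch^2$ (after relabeling $C$).

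The main obstacle is the bookkeeping in Step~1: one must verify that consecutive pieces $v_k$ and $v_{k+1}$ are glued across an interface on which both are affine with a common gradient $R_{\alpha_k}$, so that matching values reduces to choosing the constants $d_k$ appropriately. All other interfaces arising inside Lemmas~\ref{lem:angle_e1e2}, \ref{lem:changes}, and \ref{lem:changegamma} are rank-one compatible by those results, so the differential constraint $\nabla v\in\Mcal_s$ a.e.~and the claimed $L^2$ bound follow routinely once the continuous assembly is secured.
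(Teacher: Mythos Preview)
Your proof is correct and follows essentially the same strategy as the paper: iterate the angle-change lemma to pass from $R_{\theta_1}$ to $R_{\theta_2}$ via intermediate rotations, then (for $s\neq\pm e_1$) apply Lemma~\ref{lem:changegamma} to install the shears, with the $L^2$ bound read off from the explicit piecewise-affine constructions. The only cosmetic slip is a centering mismatch: your assembled $w$ has its transition region in $(0,14Nh)$, not $(-7Nh,7Nh)$, so before invoking Lemma~\ref{lem:changegamma} with $A=7Nh$ you must translate by $-7Nh\,e_1$ (or simply take $A=14Nh$); the paper's version differs only in using Lemma~\ref{lem:angle_e1e2} directly for $s=\pm e_1$ rather than routing through Lemma~\ref{lem:angle}, yielding a slightly smaller $r$.
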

\begin{proof}
This proof is essentially a combination of the previous building blocks that facilitate the switch from one rotated shear to another.
We begin with the easiest case $s=\pm e_1$, for which we assume that $\gamma_1=\gamma_2=0$, which means that we merely switch from one rotation $R_{\theta_1}$ to another $R_{\theta_2}$.

\smallskip

\textit{Step 1: The case $s=\pm e_1$.}
If not specified otherwise, we always apply the previous Lemmas for $A=B=h$ for $h>0$. 
Let $\theta_{\max}>0$ be as in Lemma \ref{lem:angle_e1e2}, set $n=\lceil\frac{|\theta_2-\theta_1|}{\theta_{\max}}\rceil$ and choose
\begin{align}\label{angles_iteration}
	0=\omega_1,\omega_2,\ldots,\omega_{n+1}=\theta_2-\theta_1\in \T^1 \quad\text{such that}\quad |\omega_{k+1} - \omega_k| \leq \theta_{\max}
\end{align}
In the following, we then construct inductively for $h>0$ and any $k\in\{1,\ldots,n\}$ a function $w_k\in W_\loc^{1,\infty}(\R\times(-h,h);\R^2)$ 
\begin{align}\label{w_k}
	\nabla w_k(y_1,\cdot)=\begin{cases}
		R_{-\omega_{k+1}} &\text{ for }y_1< -\tfrac{h}{2}+2h(k-1),\\
		\Id &\text{ for } y_1>\tfrac{h}{2} + 2h(k-1).
	\end{cases}
\end{align}
For $k=1$, we apply Lemma \ref{lem:angle_e1e2} for $\theta=\omega_2 - \omega_1 = \omega_2$ and subsequently Lemma \ref{lem:changetheta} for $\theta=-\omega_2$  (recall \eqref{angles_iteration}) to obtain $w_1\in W_\loc^{1,\infty}(\R\times(-h,h);\R^2)$ satisfying $\nabla w_1 \in \Mcal_{e_1}$, and
\begin{align*}
	\nabla w_1(y_1,\cdot)=\begin{cases}
		R_{-\omega_2} &\text{ for }y_1<-\tfrac{h}{2},\\
	    \Id &\text{ for } y_1>\tfrac{h}{2}.
	\end{cases}
\end{align*}
Now, assume that $w_k$ is already constructed for some $k\in\{1,\ldots,n-1\}$. We then design $w_{k+1}$ as follows:
An application of Lemma \ref{lem:angle_e1e2} for $\theta =\omega_{k+2}-\omega_{k+1}$, which can be done due to \eqref{angles_iteration}, and a translation of the argument by $2kh e_1$ yields $\tilde{w}_{k+1}\in W_\loc^{1,\infty}(\R\times (-h,h);\R^2\big)$ satisfying $\tilde{w}_{k+1}\in\Mcal_{e_1}$ and
\begin{align}\label{tildew_k+1}
	\nabla \tilde{w}_{k+1}(y_1,\cdot)=\begin{cases}
		\Id &\text{ for }y_1<-\tfrac{h}{2} + 2h k,\\
		R_{\omega_{k+2} - \omega_{k+1}} &\text{ for } y_1>\frac{h}{2}+2hk.
	\end{cases}
\end{align}
In light of \eqref{w_k} and \eqref{tildew_k+1}, we find that the map
\begin{align*}
	w_{k+1}(y_1,\cdot):= \begin{cases}
		R_{\omega_{k+1} - \omega_{k+2}}w_k(y_1,\cdot) &\text{ for } y_1 <-\frac{h}{2} + 2hk,\\ 
		R_{\omega_{k+1} - \omega_{k+2}}\tilde{w}_{k+1}(y_1,\cdot) + d_{k+1}&\text{ for } y_1\geq \frac{h}{2} + 2hk,
	\end{cases}
\end{align*}
is continuous for suitable $d_{k+1}\in \R^2$, contained in $W_\loc^{1,\infty}(\R\times(-h,h);\R^2)$, and satisfies $\nabla w_{k+1}\in\Mcal_{e_1}$ as well as the desired boundary conditions \eqref{w_k} for $k+1$.

Finally, we set 
\begin{align*}
	v(y):=R_{\theta_2}w_{n}(y + h (n-1)  e_1),\quad y\in \R\times (-h,h)
\end{align*}
to obtain the desired function with $r=n-\frac{1}{2}$.

\medskip

\textit{Step 2: The cases $s\neq \pm e_1$.} In these scenarios we have to additionally accommodate the shear parameter at the start and at the end.
We first proceed almost exactly as in Step 1, the only difference being that we apply Lemma \ref{lem:angle} instead of Lemma \ref{lem:angle_e1e2} for $B=h$.
This way, we produce some $\bar{r}>1$ and $\bar{v}\in W_\loc^{1,\infty}\big(\R\times (-h,h);\R^2\big)$ such that $\nabla v\in \Mcal_s$ and
\begin{align*}
	\nabla \bar{v}(y_1,\cdot)=\begin{cases}
		 \RS(\theta_1,0)=R_{\theta_1} &\text{ for }y_1<-\bar{r}h,\\
		 \RS(\theta_2,0)=R_{\theta_2} &\text{ for }y_1>\bar{r}h.
	\end{cases}
\end{align*}
Then, we apply Lemma \ref{lem:changegamma} for $A=rh$, $B=h$, $w=\bar{v}$, $\tilde{\gamma}_1=\gamma_1$, $\tilde{\gamma}_2=\gamma_2$ to generate a real number $r>\bar{r}>1$ depending on $s$, and a function $v\in W_\loc^{1,\infty}\big(\R\times (-h,h));\R^2\big)$ such that $\nabla v\in \Mcal_s$ and
\begin{align*}
	\nabla v(y_1,\cdot)=\begin{cases}
		 \RS(\theta_1,\gamma_1) &\text{ for }y_1<-rh,\\
		 \RS(\theta_2,\gamma_2) &\text{ for }y_1>rh.
	\end{cases}
\end{align*}

As we can see from the explicit constructions made in Lemmas \ref{lem:changes} - \ref{lem:angle}, the norm of $\nabla v$ in $L^\infty((-rh,rh)\times (-h,h);\R^{2\times 2})$ does not depend on $h$ and $v$ is piecewise affine; the same is true for the case $e=\pm e_1$. This proves the desired estimate \eqref{Linfty_norm}.
\end{proof}

\section{Proof of the main results}\label{sec:proofs}

\subsection{The $\Gamma$-limit with hard constraints}\label{sec:hard_constraint}

\begin{proof}[Proof of Theorem \ref{theo:hard_constraint}]
\textit{Step 1: Compactness.} Let $(u_h)_h\subset W^{1,2}(\Omega;\R^2)$ be a sequence with $\int_\Omega u_h \dd x = 0$ and $\sup_h \Ical_h(u_h) < \infty$.
In light of \eqref{constrained_density}, the latter yields that 
\begin{align}\label{bounded_energy}
	|\nabla^h u_h s| = 1 \text{ a.e.~in $\Omega$} \qand (\nabla^h u_h m)_h \text{ is bounded in $L^2(\Omega;\R^2)$.}
\end{align}
It is then evident that $(\nabla u_h)_h$ is bounded in $L^2(\Omega;\R^2)$, and thus, $(u_h)_h$ is bounded in $W^{1,2}(\Omega;\R^2)$ due to Poincar\'e's inequality in mean value form.
This is why we may select a subsequence (not relabeled) and $u\in W^{1,2}(\Omega;\R^2)$ such that $u_h\weakly u$ in $W^{1,2}(\Omega;\R^2)$. 
Moreover, we find that $\partial_2 u_h\to 0 = \partial_2 u$ in $L^2(\Omega;\R^2)$ due to the definition of the rescaled gradient; 
if $s=e_1$, then we even find that $|\partial_1 u| = |u'| \leq 1$ in view of \eqref{bounded_energy}, and the weak convergence $\partial_1 u_h \weakly \partial_1 u = u'$.

\medskip

\textit{Step 2: Lower bound.} Let $(u_h)_h$ and $u$ be as in Step 1. 
If $s=\pm e_1$, then the lower bound is trivial due to $|u'|\leq 1$ a.e.~in $(0,L)$ and
\begin{align*}
	\Ical_k(u_k) \geq 0 = \Ical(u).
\end{align*}
Otherwise, the energy satisfies
\begin{align*}
	\liminf_{h\to 0}\Ical_h(u_h) &= \liminf_{h\to 0}\int_\Omega W(\nabla^h u_h) \dd x \\
		&\geq \liminf_{h\to 0}\int_\Omega \Wmin^{\rm c}(\partial_1 u_h) \dd x \geq \int_\Omega \Wmin^{\rm c}(u') \dd x = 2 \int_0^L \Wmin^{\rm c}(u') \dd x.
\end{align*}
	
\textit{Step 3: Upper bound for $s\neq \pm e_1$.} 
The methodology for constructing recovery sequences is based on an approximation by piecewise affine functions together with Lemma \ref{lem:Wmin} and Lemma \ref{lem:switch_rotated_shear}. 
We first cover the cases $s\neq \pm e_1$, and deal with $s=\pm e_1$ later on.

\smallskip

\textit{Step 3a: Recovering piecewise affine limit deformations with large derivatives.}
Let $u:(0,L)\to \R^2$ be piecewise affine with $|u'|\geq 1$ almost everywhere. Then, there is a partition
\begin{align}\label{u_partition}
	0=t\ui{0}\leq t\ui{1}\leq \ldots \leq t\ui{N}=L
\end{align}
such that $u'$ is constant on each interval $(t\ui{n-1},t\ui{n})$ with $u\restrict{(t\ui{n-1},t\ui{n})}=:\xi\ui{n}$ for $n\in\{1,\ldots,N\}$.
In light of Lemma \ref{lem:Wmin} $b)$ and $|u'|\geq 1$ almost everywhere, we can find $\theta\ui{n}\in\T^1$ and $\gamma\ui{n}\in\R$ such that
\begin{align}\label{representation}
	u'=\xi\ui{n} = \RS(\theta\ui{n},\gamma\ui{n})e_1\qand \Wmin^c(u') = (\gamma\ui{n})^2 \text{ on } (t\ui{n-1},t\ui{n}).
\end{align}
The goal is now to fatten $u$ suitably. In view of Lemma \ref{lem:switch_rotated_shear}, we find $r>0$ and a deformation $v_h\ui{n}\in W^{1,\infty}\big((t\ui{n} -2rh, t\ui{n}+2rh)\times (-h,h);\R^2\big);\R^2)$ with $n\in \{1,\ldots, N-1\}$ such that $\nabla v_h\in \Mcal_s$ a.e.~in $(t\ui{n} -2rh, t\ui{n}+2rh)\times (-h,h)$ and
\begin{align*}
	\nabla v_h^{(n)}(y_1 + t\ui{n},\cdot) = \begin{cases}
		\RS(\theta\ui{n},\gamma\ui{n}) & \text{ if } y_1 <-rh,\\
		\RS(\theta\ui{n+1},\gamma\ui{n+1}) & \text{ if } y_1 >rh.
	\end{cases}
\end{align*}
For $h$ sufficiently small, we set $\Gamma_h := \bigcup_{n=1}^{N-1} (t\ui{n}-rh,t\ui{n}+rh)\subset (0,L)$ and define $v_h\in W^{1,2}(\Omega_h;\R^2)$ as a continuous and piecewise affine function with gradients
\begin{align*}
	\nabla v_h(y_1,\cdot) := \begin{cases}
					\RS(\theta\ui{n},\gamma\ui{n}) &\text { if } y_1 \in [t\ui{n-1},t\ui{n})\setminus\Gamma_h \text{ for some } n\in \{1,\ldots,N\},\\
					\nabla v_h^{(n)}(y_1,y_2) &\text{ if } y_1 \in [t\ui{n-1},t\ui{n})\cap\Gamma_h \text{ for some } n\in \{1,\ldots,N\}.
					\end{cases}
\end{align*}
Note that this construction yields $\nabla v_h\in \Mcal_s$ a.e.~in $\Omega_h$.

In light of \eqref{u_partition} and the estimate \eqref{Linfty_norm} for $v_h^{(n)}$, we find for the energy of $\Ecal_h(v_h)$ as in \eqref{thin_energy} that
\begin{align*}
	\Ecal_h(v_h) &= \int_{\Omega_h} W(\nabla v_h) \dd y =  \int_{-h}^h \int_{(0,L)\setminus \Gamma_h}W(\nabla v_h) \dd y + \int_{-h}^h \int_{\Gamma_h}W(\nabla v_h) \dd y\\
	&=\int_{-h}^h \int_{(0,L)\setminus \Gamma_h}\Wmin^{\rm c}(u') \dd y + \int_{-h}^h \int_{\Gamma_h}|\nabla v_h m|^2  -1 \dd y\\
	&\leq 2h\int_0^L \Wmin^{\rm c}(u') \dd x_1 + C(N-1)h^2
\end{align*}
for a constant $C>0$ independent of $h$.
Passing to the limit as $h\to 0$ then yields that
\begin{align*}
	\limsup_{h\to 0} \frac{1}{h}\Ecal_h(v_h) \leq  2\int_0^L \Wmin^{\rm c}(u') \dd x_1 = \Ical(u).
\end{align*}
Finally, we invoke the change of variables \eqref{rescaling} to generate a sequence $(u_h)_{h}\subset W^{1,\infty}(\Omega;\R^2)$ that converges to $u$ uniformly on $\Omega$ and has the same (rescaled) energy as $(v_h)_{h}$, that is, $\Ical_h(u_h) = \frac{1}{h}\Ecal_h(v_h)$.
In other words, $(u_h)_h$ is a recovery sequence for the piecewise affine function $u$.

\medskip

\textit{Step 3b: Approximation by piecewise affine functions and relaxation.}
Let $u\in  \Acal$ and find a sequence $(\tilde v_j)_j$ of piecewise affine functions such that $\tilde v_j \to u$ in $\Acal$. 
In light of the quadratic growth of $\Wmin^c$, see \eqref{Wmin_growth} in Lemma \ref{lem:Wmin}, it holds that
\begin{align*}
	\lim_{j\to \infty} \int_0^L \Wmin^c(\tilde v_j') \dd x_1 = \int_0^L \Wmin^c(u')\dd x_1.
\end{align*}
For every $j\in\N$, there is a partition $0=t_j\ui{0}\leq t_j\ui{1}\leq \ldots \leq t\ui{N_j}_j=L$ such that $\tilde v_j'= \xi_j\ui{n}$ on $(t\ui{n-1}_j,t\ui{n}_j)$ for all $n\in \{1,\ldots,N_j\}$. 
As we have seen in Step 3a, the cases $|\xi_j\ui{n}|\geq 1$ produce a non-trivial quadratic energy contribution; we now deal with the cases $|\xi_j\ui{n}|\leq 1$, which can be obtained as weak limits of suitable piecewise affine functions.

Let $J_j=\{n\in\{1,\ldots, N_j\} : |\xi_j\ui{n}|<1\}$ and let $n\in J_j$ be fixed but arbitrary.
Then, there exist $R_j\ui{n}\in\SO(2)$ and $\theta_j\ui{n}\in\T^1$ such that $\xi_j\ui{n} = \cos\theta_j\ui{n}R_j\ui{n}e_1$.
Now, define for $i\in\N$,
\begin{align*}
	\ffi_{j,i}\ui{n} : I_j\ui{n}:=(t_j\ui{n-1},t_j\ui{n}) \to \R^2,\: t\mapsto \tfrac{1}{i}\ffi_j\ui{n}(it)
\end{align*}
where $\ffi_j\ui{n} : I_j\ui{n} \to \R^2$ is piecewise affine with $\ffi_j\ui{n}= \tilde v_j$ on $\partial I_j\ui{n}$, and gradients
\begin{align*}
	(\ffi_j\ui{n})'=\begin{cases}
					R_j\ui{n}R_{\theta_j\ui{n}}e_1 \text{ on } (t_j\ui{n-1}, \tau_j\ui{n}),\\
					R_j\ui{n}R_{-\theta_j\ui{n}}e_1 \text{ on } [\tau_j\ui{n},t_j\ui{n}),
				\end{cases}
\end{align*}
with $\tau_j\ui{n}= \frac{1}{2}(t_j\ui{n-1}+t_j\ui{n})$.
It then holds that $\ffi_{j,i}\ui{n}\weakly \tilde v_j$ in $W^{1,2}(I_j\ui{n};\R^2)$ for $i\to \infty$. Indeed, the Riemann-Lebesgue Lemma yields that
\begin{align*}
	(\ffi_{j,i}\ui{n})'\weakly \frac{1}{|I_j\ui{n}|}\int_{I_j\ui{n}} (\ffi_j\ui{n})'\dd t = \frac{1}{2}R_j\ui{n}(R_{\theta_j\ui{n}} + R_{-\theta_j\ui{n}})e_1 = R_j\ui{n}\cos \theta_j\ui{n} e_1 = \xi_j\ui{n}
\end{align*}
in $L^2(I_j\ui{n};\R^2)$, and due to $\ffi_{j,i}\ui{n} = \tilde v_j\restrict{I_j\ui{n}}$ on $\partial I_j\ui{n}$ we may apply Poincar\'e's inequality to conclude the desired convergence.
We are thus lead to consider the piecewise affine functions $v_{j,i} : (0,L)\to \R^2$ given by
\begin{align*}
	v_{j,i} = \tilde v_j + \sum_{n\in J_j}\mathbbm{1}_{I_j\ui{n}}(\ffi_{j,i}\ui{n} - \tilde v_j),
\end{align*}
which satisfy
\begin{align*}
	v_{j,i}\weakly v_j \text{ in } W^{1,2}(\Omega;\R^2)\text{ for $i\to \infty$}\qand |v_{j,i}'|\geq 1 \text{ a.e.~in } (0,L) \text{ for all $j,i\in\N$,}
\end{align*}
as well as
\begin{align*}
	\lim_{j\to\infty}\lim_{i\to \infty}\int_0^L \Wmin^c(v_{j,i}') \dd x_1 = \int_0^L \Wmin^c(u') \dd x_1.
\end{align*}
Since $\Wmin^c$ is coercive \eqref{Wmin_growth}, we may select a diagonal sequence $v_j = v_{j,i(j)}$ such that
\begin{align}\label{piecewiese_diagonal}
	v_{j}\weakly u \text{ in } W^{1,2}(\Omega;\R^2),\quad \lim_{j\to\infty}\Ical(v_j) = \Ical(u), \quad |v_{j}'|\geq 1.
\end{align}

\smallskip

\textit{Step 3c: Diagonal sequence.}
Let $u\in \Acal$ and $v_j$ be as in Step 3b. By applying Step 3a for $v_j$ with $j\in\N$, we obtain a sequence $(u_{j,h})_h\subset W^{1,2}(\Omega;\R^2)$ that satisfies, under consideration of \eqref{piecewiese_diagonal},
\begin{align*}
	\lim_{j\to\infty}\lim_{h\to 0}\norm{u_{j,h} -u}_{L^2(\Omega;\R^2)} = 0\qand \limsup_{j\to\infty}\limsup_{h\to 0} \Ical_h(u_{j,h}) \leq \Ical(u). 
\end{align*}
This allows us to select a diagonal sequence in the sense of Attouch $(u_h)_h = (u_{j(h),h})_h$ such that
\begin{align*}
	u_h\to u \text{ in } L^2(\Omega;\R^2)\qand \limsup_{k\to\infty}\Ical_h(u_h) \leq \Ical(u).
\end{align*}
Finally, we repeat Step 1 for the sequence $(u_h)_h$ to conclude that $u_h\weakly u$ in $W^{1,2}(\Omega;\R^2)$ due to the uniqueness of weak limits.	
	
\medskip

\textit{Step 4: Upper bound for $s=\pm e_1$.} 
We deal with this case by selecting, with the help of standard results in the context of asymptotic and convex analysis, for $u\in \Acal$ with $|u'|\leq 1$ a.e.~in $(0,L)$ a sequence $(\tilde v_j)_j$ of piecewise affine functions with $|\tilde v_j|=1$ a.e.~in $(0,L)$ such that $\tilde v_j \weakly u$ in $W^{1,2}(\Omega;\R^2)$ similar to Step 3b.
Then, apply Step 3a for $u=v_j$ for each $j\in \N$ to obtain $u_{j,h}\in W^{1,2}(\Omega;\R^2)$ such that 
\begin{align*}
	\lim_{j\to\infty}\lim_{h\to 0}\norm{u_{j,k}-u}_{L^2(\Omega;\R^2)}\qand \limsup_{h\to 0} \Ical_h(u_{j,h}) = 0 = \Ical(u) \text{ for every } j\in \N.
\end{align*}
The rest can be handled exactly as in Step 3c.
\end{proof}

\subsection{The $\Gamma$-limit with soft constraints}\label{sec:soft_constraint}			

\begin{proof}[Proof of Corollary \ref{cor:soft_constraint}]
The methodology of the lower bound is primarily inspired by \cite[Section 4.1]{CDK13a}. 
For the reader's convenience, we provide a self contained proof.

\textit{Step 1: Compactness.}
Exactly as in \cite[Equation (3.1)]{CDK11}, we rewrite every $Q\in\SO(2)$ as $Q=a\otimes s + a^\perp \otimes m$ with $a\in \Scal^1$, and the energy density $W_{\eps_k}$ as
\begin{align}\label{decomposition}
	W_{\eps_k}(F) &= \min_{\gamma\in \R,\, a\in\Scal^1}\big(\frac{1}{\eps_k}|F(s\otimes s + m\otimes m - \gamma s\otimes m) - a\otimes s - a^\perp\otimes m|^2 + \gamma^2\big)\nonumber\\
	&=\frac{1}{{\eps_k}}\big(|Fs - a_{\eps_k}(F)|^2 + |Fm - \gamma_{\eps_k}(F)Fs - a_{\eps_k}^\perp(F)|^2\big) + \gamma_{\eps_k}^2(F),
\end{align}
where $a_{\eps_k}(F)\in\Scal^1$ and $\gamma_{\eps_k}(F)\in\R$ are the minimizers that define $W_{\eps_k}(F)$.

Let $(u_k)_k\subset W^{1,1}(\Omega;\R^2)$ with bounded energy and vanishing mean value.
In the following, we write briefly $a_k:=a_{\eps_k}(\nabla^{h_k}u_k)$ and $\gamma_k = \gamma_{\eps_k}(\nabla^{h_k}u_k)$.
The rescaled gradient of $u_k$ can be written as
\begin{align*}
	\nabla^{h_k} u_k = A_k + B_k
\end{align*}
where
\begin{align}\label{def_A_k}
	\begin{split} 
		A_k &= a_k\otimes s + (\gamma_ka_k+a_k^\perp)\otimes m\\
		B_k &= (\partial_s^{h_k} u_k - a_k)\otimes s + (\partial_m^{h_k} u_k -\gamma_{\eps_k}\partial_s^{h_k}u_k - a_k^\perp)\otimes m + \gamma_k(\partial_s^{h_k}u_k - a_k)\otimes m,
	\end{split}
\end{align}
with $\partial_s^{h_k} u_k = (\nabla^{h_k} u_k)s$ and $\partial_m^{h_k} u_k = (\nabla^{h_k} u_k)m$.
In light of \eqref{decomposition}, we deduce that $(\gamma_k)_k\subset L^2(\Omega)$ is bounded and that
\begin{align*}
	\partial_s^{h_k} u_k - a_k \to 0,\quad \partial_m^{h_k} u_k -\gamma_{\eps_k}\partial_s^{h_k}u_k - a_k^\perp \to 0 \text{ in }L^2(\Omega;\R^2).
\end{align*}
Via H\"older's inequality, we conclude that $\gamma_k(\partial_s^{h_k}u_k - a_k)\to 0$ in $L^1(\Omega;\R^2)$. Finally, since $(a_k)_k\subset L^\infty(\Omega;\R^2)$, we may select a subsequence (not relabeled), such that
\begin{align}\label{conv_A_k}
	A_k\weakly A \text{ in } L^2(\Omega;\R^{2\times 2}),\quad B_k\to 0 \text{ in } L^1(\Omega;\R^{2\times 2}),
\end{align}
for some $A\in L^2(\Omega;\R^{2\times 2})$, which implies that $\nabla^{h_k} u_k \weakly A$ in $L^1(\Omega;\R^2)$.
In particular, we obtain that $\partial_2 u_k \to 0$ in $L^1(\Omega;\R^2)$.
Since $u_k$ has vanishing mean value, we infer from Poincar\'e's inequality that $u_k\weakly u$ in $W^{1,1}(\Omega;\R^2)$ for $u\in \Acal$.

Moreover, if $s=\pm e_1$, then $|u'|\leq 1$ almost everywhere as the weak limit of $\partial_1 u_k = A_ke_1$, which satisfies $|A_ke_1|=1$ almost everywhere.

\medskip

\textit{Step 2: Lower bound.} Let $(u_k)_k \subset W^{1,1}(\Omega;\R^2)$ and $u$ as in Step 1.
The lower bound for $s=\pm e_1$ is exactly as in the proof of Theorem \ref{theo:hard_constraint}.
As for $s\neq \pm e_1$, we combine \eqref{constrained_density} with \eqref{decomposition}, \eqref{def_A_k} and \eqref{conv_A_k}, to conclude that
\begin{align*}
	\liminf_{k\to\infty}\Ical_k(u_k) &= \liminf_{k\to\infty}\int_\Omega W_{\eps_k}(\nabla^{h_k} u_k) \dd x \geq \liminf_{k\to\infty}\int_\Omega \gamma_k^2 \dd x = \liminf_{k\to\infty}\int_\Omega W(A_k) \dd x \\
	&\geq \liminf_{k\to\infty}\int_\Omega \Wmin^c(A_ke_1) \dd x \geq \int_\Omega \Wmin^c(Ae_1) \dd x = 2\int_0^L \Wmin^c(u') \dd x_1 =\Ical(u). 
\end{align*}

\smallskip

\textit{Step 3: Upper bound.} We proceed exactly as in the proof of Theorem \ref{theo:hard_constraint}. 
The only difference is that the different growth of $\Ical_k$ yields, in Steps 3c and 4, merely the weak convergence of the diagonal sequence $u_k\weakly u$ in $W^{1,1}(\Omega;\R^2)$ as opposed to $W^{1,2}(\Omega;\R^2)$.
\end{proof}

The following proposition shows that, under suitable scaling relations between the parameters $\eps$ and $k$, which govern the diverging elastic energy contribution and the thickness of the film, the recovery-sequence for Corollary \ref{cor:soft_constraint} can be made smooth. 
However, we will see in Section \ref{sec:gap} that this is not always possible for other scaling regimes and non-smooth sequences are required to produce optimal energy.

\begin{proposition}\label{prop:ratio}
	Let $(\eps_k,h_k)_k$ be any sequence with $(\eps_k,h_k)\to (0,0)$ such that
	\begin{align}\label{ratio}
		\frac{h^{\alpha}_k}{\eps_k}\to 0
	\end{align}
	for some $\alpha\in (0,2)$.
	Then, the recovery sequence for the $\Gamma$-convergence result in Corollary \ref{cor:soft_constraint} can be chosen to be smooth.
\end{proposition}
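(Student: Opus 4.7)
The plan is to smooth the recovery sequence $u_k$ from the proof of Corollary~\ref{cor:soft_constraint} by convolution. Recall that $u_k$ is (after the diagonal selection inherited from Theorem~\ref{theo:hard_constraint}) a continuous piecewise affine function on $\Omega$ with $\nabla^{h_k}u_k\in \Mcal_s$ a.e.\ and $\Ical_k(u_k)\to \Ical(u)$. I would define $\tilde u_k := u_k * \rho_{\delta_k}$ for a standard mollifier $\rho_{\delta_k}\in C_c^\infty(\R^2)$ at a scale $\delta_k\to 0$ to be chosen (with $u_k$ suitably extended beyond $\Omega$, e.g.\ by reflection across $\partial \Omega$, so that the rescaled gradient stays in a bounded subset of $\Mcal_s$). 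Then $\tilde u_k$ is $C^\infty$, and $\nabla^{h_k}\tilde u_k = (\nabla^{h_k}u_k) * \rho_{\delta_k}$ since the rescaled gradient commutes with convolution.

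The crucial step is the extra-energy estimate. By Lemma~\ref{lem:switch_rotated_shear} and Step~3a of the proof of Theorem~\ref{theo:hard_constraint}, $\nabla^{h_k}u_k$ is piecewise constant, with large affine pieces of size $O(1)$ separated by $N_k$ transition zones of width $O(h_k)$; inside each zone there are $O(1)$ interfaces of length $O(1)$ in rescaled coordinates, so the total interface length in $\Omega$ is $O(N_k)$. Outside the $\delta_k$-tubular neighborhood $\Sigma_{\delta_k}$ of the interface set, the convolution leaves the gradient untouched, hence $\nabla^{h_k}\tilde u_k=\nabla^{h_k}u_k\in \Mcal_s$ and $W_{\eps_k}(\nabla^{h_k}\tilde u_k)\leq W(\nabla^{h_k}u_k)$ (since $W_{\eps_k}\leq W$ on $\Mcal_s$). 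Inside $\Sigma_{\delta_k}$, which has area $O(N_k\delta_k)$ in $\Omega$, the rescaled gradient is a convex combination of finitely many matrices $R_i(\Id+\gamma_i s\otimes m)\in \Mcal_s$; choosing $\gamma$ in the infimum defining $W_{\eps_k}$ as the corresponding weighted average of the $\gamma_i$, and using the identity $(\Id+\gamma_i s\otimes m)(\Id-\gamma s\otimes m)=\Id+(\gamma_i-\gamma)s\otimes m$ (which relies on $m\cdot s=0$), produces a bound $\dist(\nabla^{h_k}\tilde u_k\,(\Id-\gamma s\otimes m),\SO(2))\leq C$ depending only on the bounded jumps across interfaces, so $W_{\eps_k}(\nabla^{h_k}\tilde u_k)\leq C/\eps_k+C$ there. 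Summing,
\[
\Ical_k(\tilde u_k)\leq \Ical_k(u_k)+\frac{CN_k\delta_k}{\eps_k}+CN_k\delta_k.
\]

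It remains to pick $\delta_k\to 0$ with $N_k\delta_k/\eps_k\to 0$. Under the hypothesis $h_k^\alpha/\eps_k\to 0$ for some $\alpha\in(0,2)$, the choice $\delta_k:=h_k^\beta$ with any $\beta\in(\alpha,2)$ gives $\delta_k/\eps_k=h_k^{\beta-\alpha}\cdot(h_k^\alpha/\eps_k)\to 0$; simultaneously arranging the diagonal $N_k=N_{j(k)}$ in Corollary~\ref{cor:soft_constraint} to grow sufficiently slowly makes the full extra term vanish, whence $\Ical_k(\tilde u_k)\to \Ical(u)$. Combined with $\tilde u_k-u_k\to 0$ in $L^1$, this yields the desired smooth recovery sequence. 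The main obstacle is the balance of scales: one must fit $\delta_k$ between $0$ and $\eps_k$ with enough room to also accommodate the growing approximation level $N_k$, and the hypothesis $\alpha<2$ is precisely what provides the required slack between $\eps_k$ and the natural thickness scale $h_k^2$ on which the energy estimate inside $\Sigma_{\delta_k}$ relies.
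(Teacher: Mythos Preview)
Your mollification approach is correct and genuinely different from the paper's proof. The paper constructs the smooth recovery sequence directly: on transition intervals of width $h_k^{\beta}$ with $\beta=2-\alpha$, it smoothly interpolates the rotation angle $\theta_k(x_1)$ and shear $\gamma_k(x_1)$ between the consecutive values $(\theta^{(n)},\gamma^{(n)})$, so that $\nabla^{h_k}u_k=\RS(\theta_k,\gamma_k)+h_kE$ with $|E|=O(h_k^{-\beta})$. The elastic penalty then contributes $\tfrac{1}{\eps_k}O(h_k^{2-2\beta})$ pointwise, and after integration over the zone of width $h_k^\beta$ one gets $O(h_k^{2-\beta}/\eps_k)=O(h_k^\alpha/\eps_k)\to 0$. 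Here the extra factor $h_k$ (from the thin $x_2$-direction) is essential, and $\alpha<2$ is needed so that $\beta>0$, i.e., so that the transition zone has positive width.

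Your route avoids this balance entirely: you only need $C_jN_j\,\delta_k/\eps_k\to 0$, which can always be arranged by taking $\delta_k=o(\eps_k)$ (e.g.\ $\delta_k=\eps_k^2$) and letting the diagonal $j(k)$ grow slowly enough. In particular, your argument nowhere uses the scale $h_k^2$, so your closing sentence about ``the natural thickness scale $h_k^2$ on which the energy estimate inside $\Sigma_{\delta_k}$ relies'' is off---your own estimate does not involve $h_k$ at all, and your method actually proves a statement stronger than the Proposition (smooth recovery for any $(\eps_k,h_k)\to(0,0)$). Two minor points to tidy: the constant $C$ in your bound on $\Sigma_{\delta_k}$ depends on the approximation level $j$ (for slip directions near $\pm e_2$ the auxiliary shears $2\cot\varphi$ from Lemma~\ref{lem:angle_e1e2} can be large), but this is absorbed by the diagonal; and reflection across $\partial\Omega$ does not preserve $\Mcal_s$, but the boundary strip still only contributes $O(\delta_k/\eps_k)$ since the extended rescaled gradient remains bounded.
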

\begin{proof}
The methodology for constructing recovery sequences is based on the techniques used in the context of 3d-1d dimension reduction results for hyperelastic strings \cite[Theorem 4.5]{Sca06}, see also \cite[Theorem 4.1]{EnK21a}. 
The case $s=\pm e_1$ can be dealt with in the same way as in the aforementioned works (with only minor adjustments) due to the trivial structure of the $\Gamma$-limit and $W_{\eps_k}(F)\leq \frac{1}{\eps_k}\dist^2(F,\SO(2))$ for all $F\in\R^{2\times 2}$. We shall thus assume that $s\neq \pm e_1$.

As before, we begin by recovering piecewise affine limit deformations.
Let $u:(0,L)\to \R^2$ be piecewise affine with $|u'|\geq 1$ a.e.~in $(0,L)$ and recall \eqref{u_partition} and \eqref{representation} for suitable $\theta\ui{n}\in \T^1$ and $\gamma\ui{n}\in\R$.

Set $\beta = 2-\alpha\in (0,2)$, where $\alpha$ is given as in \eqref{ratio} and for $k$ large enough, we define the disjoint union $\Gamma_k = \bigcup_{n=1}^{N-1}[t\ui{n},t\ui{n}+h_k^\beta)$.
We then set the smooth functions $\theta_k : \Gamma_k\to\SO(2)$ and $\gamma_k: \Gamma_k \to \R$ such that $\theta_k = \theta\ui{n}$ and $\gamma_k = \gamma\ui{n}$ in a neighborhood around $t\ui{n}$, $\theta_k = R\ui{n+1}$ and $\gamma_k = \gamma\ui{n+1}$ in a neighborhood around $t\ui{n} + h_k^\beta$, as well as $\theta_k' = 0$ and $\gamma_k'=0$ when near the boundary of $\Gamma_k$.
Then, these sequences satisfy
\begin{align*}
	\norm{\theta_k}_{L^{\infty}(\Gamma_k;\R^{2\times 2})} = \Ocal(1),\quad \norm{\theta_k'}_{L^{\infty}(\Gamma_k;\R^{2\times 2})} = \Ocal(h_k^{-\beta}) 
\end{align*}
as well as
\begin{align*}
	\norm{\gamma_k}_{L^{\infty}(\Gamma_k)} = \Ocal(1),\quad \norm{\gamma_k'}_{L^{\infty}(\Gamma_k)} = \Ocal(h_k^{-\beta}) .
\end{align*}

A recovery sequence for $u$ is then given by
\begin{align*}
	u_k(x) = \begin{cases}
					\displaystyle \RS(\theta\ui{1},\gamma\ui{1})x_{h_k} + b_k\ui{1}	&\text{if } x_1\in [0,t\ui{1})\\[0.2cm]
					\displaystyle \int_{t\ui{n}}^{x_1} \RS(\theta_k(t),\gamma_k(t))e_1 \dd t  + d_k\ui{n} &\text{if } x_1\in [t\ui{n},t\ui{n}+h_k^\beta) \text{ for } n=1,\ldots N-1\\
					\displaystyle \quad + h_k x_2\RS(\theta_k(x_1),\gamma_k(x_1))e_2\\[0.2cm]
					\displaystyle \RS(\theta\ui{n+1},\gamma\ui{n+1})x_{h_k}x_{h_k} +b_k\ui{n}	&\text{if }  x_1\in [t\ui{n}+h_k^\beta,t\ui{n+1}) \text{ for } n=1,\ldots,N-1
			\end{cases}
\end{align*}
for $x\in\Omega$ where $x_{h_k} = x_1e_1 + h_k x_2e_2$; the translations $b_k\ui{n},d_k\ui{n}\in\R^2$ make $u_k$ continuous.
The rescaled gradients of $u_k$ then have the form
\begin{align*}
	\nabla^{h_k} u_k(x) = \begin{cases}
					\displaystyle \RS(\theta\ui{1},\gamma\ui{1}) &\text{if } x_1\in [0,t\ui{1})\\[0.2cm]
					\displaystyle \RS(\theta_k(x_1),\gamma_k(x_1))+h_k E(x) &\text{if }x_1\in [t\ui{n},t\ui{n}+h_k^\beta)\text{ for } n=1,\ldots N-1\\[0.2cm]
					\displaystyle \RS(\theta\ui{n+1},\gamma\ui{n+1})	&\text{if }  x_1\in [t\ui{n}+h_k^\beta,t\ui{n+1}) \text{ for } n=1,\ldots,N-1
			\end{cases}
\end{align*}
with
\begin{align}\label{auxE}
	E(x) = x_2\big(\frac{d}{d x_1} R_{\theta_k(x_1)}+ \frac{d}{d x_1}(\gamma_k(x_1)R_{\theta_k(x_1)}) s\otimes m\big)e_2\otimes e_2
\end{align}
for $x\in\Omega$.
By design, we obtain that
\begin{align*}
	u_k \to u \text{ in } W^{1,2}(\Omega;\R^2).
\end{align*}
Joining \eqref{approx_densities} with \eqref{representation} - \eqref{auxE}, and $|\Gamma_k| = h_k^\beta$, we compute that
\begin{align*}
	\Ical_k(u_k) &=  \int_\Omega W_{\eps_k}(\nabla^{h_k} u_k) \dd x 
			\leq 2 \int_{(0,L)\setminus \Gamma_k} \Wmin^c(u') \dd x_1 \\ 
		&\quad+ \int_{\Gamma_{k}\times (-1,1)} \frac{1}{\eps_k}\dist^2(R_{\theta_k(x_1)} + h_k E(x)(\Id - \gamma_k(x_1) s\otimes m),\SO(2)) + \gamma_k(x_1)^2 \dd x\\
		&\leq 2\int_0^L \Wmin^c(u') \dd x_1 + \int_{\Gamma_{k}\times (-1,1)} \frac{1}{\eps_k}h_k^2|E(x)(\Id - \gamma_k(x_1) s\otimes m)|^2 + \gamma_k(x_1)^2 \dd x\\
		&\leq \Ical(u) + \Ocal\Big(\frac{h_k^{2-\beta}}{\eps_k}\Big) + \Ocal\big(h_k^\beta).
\end{align*}
Finally, we exploit \eqref{ratio} to conclude that
\begin{align*}
	\limsup_{k\to\infty}\Ical_k(u_k) \leq \Ical(u).
\end{align*}
The rest can be done exactly as in Steps 3b-4 in the proof of Theorem \ref{theo:hard_constraint}.
\end{proof}

\subsection{The smoothness gap of the hard contraint}\label{sec:gap}

In this section, we will see that the hard constraint case for $s=e_1$ exhibits some unexpected extra rigidity
when constrained to smoother functions preventing jumps of the rotation or shear parameter, leading to a Lavrentiev phenomenon. The basis of this is the following necessary condition 
obtained by combining the constraint with gradient structure.
\begin{lemma}\label{lem:HCcurl}
Suppose that $\Omega$ is a bounded domain in $\R^2$ and $v\in W^{1,1}(\Omega;\R^2)$
satisfies the hard constraint for $s=\pm e_1$, i.e.,
\begin{align}\label{HC}  
\begin{aligned}
	\nabla v(x)=\RS(\theta(x),\gamma(x);\pm e_1)=R_{\theta(x)}(\Id + \gamma(x) e_1\otimes e_2)\quad\text{for a.e.}~x\in \Omega.
\end{aligned}
\end{align}
with $\gamma\in W^{1,1}(\Omega;\R)\cap C^0(\Omega;\R)$ and $\theta\in W^{1,1}(\Omega;\T^1)\cap C^0(\Omega;\T^1)$. Then 
\begin{align}\label{HCcurl}  
\begin{aligned}
	\partial_1 \theta = \partial_1 \gamma
	\quad\text{and}\quad
	\partial_2 \theta = \gamma \partial_1 \theta\quad\text{in $\Omega$},
\end{aligned}
\end{align}
in the sense of distributions.
\end{lemma}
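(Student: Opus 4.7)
The plan is to write out the entries of $\nabla v$ in terms of $\theta$ and $\gamma$, and then exploit the only structural information available beyond the hard constraint, namely that $\nabla v$ is a gradient, which distributionally means that its rows are curl-free. This yields two scalar identities, and an elementary algebraic manipulation turns them into the two desired equations.

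Concretely, writing $R_{\theta} = (\cos\theta)\Id + (\sin\theta)(e_2\otimes e_1 - e_1\otimes e_2)$, the constraint \eqref{HC} gives the entries
\begin{equation*}
	\partial_1 v_1 = \cos\theta,\quad \partial_2 v_1 = -\sin\theta + \gamma\cos\theta,\quad \partial_1 v_2 = \sin\theta,\quad \partial_2 v_2 = \cos\theta + \gamma\sin\theta.
\end{equation*}
Since $v\in W^{1,1}(\Omega;\R^2)$ and the above entries are locally bounded (because $\theta,\gamma$ are continuous), the equality $\partial_1\partial_2 v_j = \partial_2\partial_1 v_j$ for $j=1,2$ holds in the sense of distributions. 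Using $\theta,\gamma\in W^{1,1}\cap C^0$ together with the Leibniz rule (valid because $\sin\theta,\cos\theta\in W^{1,1}\cap L^\infty$ and $\gamma$ is locally bounded), I would differentiate both sides as $L^1_{\mathrm{loc}}$ functions to arrive at
\begin{align*}
	-\sin\theta\,\partial_2\theta &= -\cos\theta\,\partial_1\theta + \cos\theta\,\partial_1\gamma - \gamma\sin\theta\,\partial_1\theta, \\
	\phantom{-}\cos\theta\,\partial_2\theta &= -\sin\theta\,\partial_1\theta + \sin\theta\,\partial_1\gamma + \gamma\cos\theta\,\partial_1\theta.
\end{align*}

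The key observation, and essentially the whole content of the lemma, is that these two equations can be rewritten in the linear form
\begin{equation*}
	\begin{pmatrix} -\sin\theta & -\cos\theta \\ \phantom{-}\cos\theta & -\sin\theta \end{pmatrix} \begin{pmatrix} \partial_2\theta - \gamma\,\partial_1\theta \\ \partial_1\gamma - \partial_1\theta \end{pmatrix} = \begin{pmatrix} 0 \\ 0 \end{pmatrix}.
\end{equation*}
The coefficient matrix is a rotation, hence its determinant equals $\sin^2\theta + \cos^2\theta = 1$, so it is pointwise invertible. Consequently $\partial_2\theta = \gamma\,\partial_1\theta$ and $\partial_1\gamma = \partial_1\theta$ as asserted in \eqref{HCcurl}.

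I do not anticipate any substantive obstacle here: the only thing to be careful about is ensuring that the distributional calculations are legitimate, which follows from the assumed $W^{1,1}\cap C^0$ regularity (so in particular $\theta,\gamma$ are locally bounded) and the standard Leibniz rule for Sobolev functions times locally bounded Sobolev functions. Once the two curl identities are written down, the algebraic step is essentially the observation that a rotation is injective.
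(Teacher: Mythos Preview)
Your proof is correct and follows essentially the same route as the paper: write $\nabla v$ explicitly in $\theta,\gamma$, use the curl-free condition $\partial_2(\partial_1 v)=\partial_1(\partial_2 v)$ (justified distributionally via the $W^{1,1}\cap C^0$ regularity and density), and then read off \eqref{HCcurl} from the orthonormality of $(\cos\theta,\sin\theta)$ and $(-\sin\theta,\cos\theta)$. Your final step phrases this orthonormality as invertibility of a rotation matrix, which is exactly what the paper uses in slightly different words.
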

\begin{proof}
Clearly, $\Curl \nabla v=0$ in $\Omega$ (row-wise $\curl$) in the sense of distributions. Expressing this in terms of $\theta$ and $\gamma$, we infer from \eqref{HC} that
\begin{align}\label{lem:HCc-1}  
	0=\partial_2 \vecr{\cos \theta\\\sin\theta}-\partial_1\vecr{\gamma\cos \theta\\\gamma\sin\theta}-\partial_1\vecr{-\sin\theta\\\cos\theta}.
\end{align}
Exploiting that all appearing quantities are contained in $W^{1,1}(\Omega;\R)\cap C^0(\Omega;\R)$, we observe by density of smooth functions that
\begin{align}\label{lem:HCc-2}  
\begin{aligned}
	\partial_2 \vecr{\cos \theta\\\sin \theta}=(\partial_2 \theta)\vecr{-\sin\theta\\\cos\theta},~~~
	\partial_1\vecr{-\sin\theta\\\cos\theta}=-(\partial_1 \theta)\vecr{\cos\theta\\\sin\theta},~~~\text{and}&\\
	\partial_1\vecr{\gamma\cos\theta\\\gamma\sin\theta}=
	(\partial_1 \gamma)\vecr{\cos \theta\\\sin \theta}+(\partial_1 \theta)\gamma\vecr{-\sin\theta\\\cos\theta}&.
\end{aligned}
\end{align}
Since $(\cos\theta,\sin\theta)$ and $(-\sin\theta,\cos\theta)$ are orthonormal, \eqref{lem:HCc-1} and \eqref{lem:HCc-2} combined yield \eqref{HCcurl}.
\end{proof}

The first equation in \eqref{HCcurl} couples shear and bending of the fibers (the lines in direction $e_1$ in $\Omega$). This implies a minimal energy cost associated to bending,
for instance if it is forced by shortening the distance between the ``ends'' of $v$ (at $x_1=0$ and $x_1=L$).

\begin{proposition}\label{prop:smoothturn}
Recall $\Omega_h:=(0,L)\times (-h,h)$, $L,h>0$, and let $v\in W^{2,1}(\Omega_h;\R^2)\cap C^{1}(\overline\Omega_h;\R^2)$ satisfy \eqref{HC} on $\Omega_h$.
In particular, $\gamma\in W^{1,1}(\Omega_h;\R)\cap C^{0}(\overline\Omega_h;\R)$ and 
$\theta\in W^{1,1}(\Omega_h;\T^1)\cap C^0(\overline{\Omega}_h;\T^1)$.
If $v$ satisfies
\begin{align}\label{squeeze}
	\frac{1}{L}\abs{v(0,\cdot)-v(L,\cdot)}\leq \delta
\end{align}
for some $\delta\in [0,1)$, then $\Ecal_{h}(v)=\int_{\Omega_h} \gamma^2\dd y\geq c h$ for a constant $c>0$ independent of $h$ and $v$, cf.~\eqref{density_slip}, \eqref{thin_energy}, and \eqref{constrained_density}.
\end{proposition}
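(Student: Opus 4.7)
The plan is to use Lemma~\ref{lem:HCcurl} to eliminate the rotation variable in favour of $\gamma$ along each horizontal fibre, and then convert the shortness hypothesis \eqref{squeeze} into a lower bound on $\int \gamma^2$ via an elementary trigonometric inequality.

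Since $\overline{\Omega_h}$ is simply connected, I would begin by lifting the continuous map $\theta\colon\overline{\Omega_h}\to\T^1$ to a continuous $\tilde\theta\colon\overline{\Omega_h}\to\R$; this lift inherits the $W^{1,1}$ regularity of $\theta$ and does not affect \eqref{HC} because $R_\theta = R_{\tilde\theta}$. The first identity in \eqref{HCcurl} then reads $\partial_1(\tilde\theta-\gamma)=0$ in $L^1(\Omega_h)$, so by Fubini and absolute continuity on a.e.\ horizontal fibre there is a continuous function $f\colon(-h,h)\to\R$ with $\tilde\theta(x_1,x_2)=\gamma(x_1,x_2)+f(x_2)$.

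On a.e.\ fibre $\{x_2\}\times(0,L)$, the map $x_1\mapsto v(x_1,x_2)$ is absolutely continuous with $\partial_1 v=R_{\tilde\theta}e_1$. Integrating and pulling out the constant rotation $R_{f(x_2)}$ gives
\begin{align*}
	v(L,x_2)-v(0,x_2)=R_{f(x_2)}\int_0^L\vecr{\cos\gamma(t,x_2)\\ \sin\gamma(t,x_2)}\,dt.
\end{align*}
Rotations preserve length, so \eqref{squeeze} implies $\bigl|\int_0^L(\cos\gamma,\sin\gamma)^\top\,dt\bigr|\leq L\delta$ for a.e.\ $x_2$. Inspecting the first coordinate yields $\int_0^L\cos\gamma(t,x_2)\,dt\leq L\delta$, i.e.,
\begin{align*}
	\int_0^L\bigl(1-\cos\gamma(t,x_2)\bigr)\,dt\geq L(1-\delta).
\end{align*}

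Finally, the pointwise bound $1-\cos\gamma\leq \gamma^2/2$ (valid on all of $\R$, since the left-hand side is bounded by $2$) and integration over $x_2\in(-h,h)$ give
\begin{align*}
	\int_{\Omega_h}\gamma^2\,dy\geq 2\int_{-h}^{h}\int_0^L(1-\cos\gamma)\,dt\,dx_2\geq 4L(1-\delta)h,
\end{align*}
establishing the claim with $c=4L(1-\delta)>0$. The only slightly technical step is the continuous lift of $\theta$ combined with the decoupling $\tilde\theta = \gamma + f(x_2)$; everything after that is elementary. It is worth noting that the constructions of Lemma~\ref{lem:angle_e1e2} deliberately violate continuity of $(\theta,\gamma)$ by jumping between distinct rotated shears, which is precisely the mechanism that lets them bend without the $\Ocal(h)$ cost this proposition rules out in the smooth category.
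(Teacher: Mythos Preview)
Your proof is correct and, in fact, considerably more direct than the paper's. Both arguments hinge on Lemma~\ref{lem:HCcurl} to write $\tilde\theta(y)=\gamma(y)+f(y_2)$ along each fibre, and both then exploit the shortness condition \eqref{squeeze} after rotating by a fibre-dependent angle. From that point the two diverge: the paper argues by contradiction, introducing a level set $G_\eta=\{|\gamma|\geq\eta\}$ and showing that if $|G_\eta|$ were small on some positive-measure set of fibres, one could derive mutually incompatible bounds on $\sin(-\hat\alpha)$ and $\cos(-\hat\alpha)$ for the fibre offset $\hat\alpha$. Your route bypasses this entirely by reading off $\int_0^L\cos\gamma\,dt\leq L\delta$ from the norm bound on the rotated displacement and then applying the elementary inequality $\gamma^2\geq 2(1-\cos\gamma)$ pointwise. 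This yields the result in one line with the explicit constant $c=4L(1-\delta)$, whereas the paper's argument gives no explicit $c$. One small cosmetic point: your parenthetical justification ``since the left-hand side is bounded by $2$'' only covers $|\gamma|\geq 2$; the inequality $1-\cos\gamma\leq\gamma^2/2$ on all of $\R$ follows more cleanly from the Taylor remainder or from $g(\gamma)=\gamma^2/2-1+\cos\gamma$ being convex with $g(0)=g'(0)=0$.
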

\begin{proof}
For $\eta\in(0,1)$ (to be chosen later), define
\[ 
	G_\eta=G_\eta(y_2,\gamma):= \big\{y_1\in (0,L) \mid \eta\leq |\gamma(y_1,y_2)|\big\},~~~y_2\in (-h,h).
\]
This allows us to estimate the energy as follows:
\begin{align*}
	\Ecal_{h}(v)=\int_{-h}^h\int_0^L \gamma^2\dd y_1 \dd y_2	\geq \int_{-h}^h\int_{G_\eta(y_2)} \eta^2 \dd y_1 \dd y_2 = \eta^2 \int_{-h}^h |G_\eta|\dd y_2.
\end{align*}
If $|G_\eta|\geq \eta$ for a.e.~$y_2\in(-h,h)$, we obtain the assertion for $c:=2\eta^3$.

Otherwise, there exists a set of positive measure $V\subset (-h,h)$ such that $|G_\eta|<\eta$ for all $y_2\in V$. 
We claim that for a suitably small choice of $\eta=\eta(L,\delta)$, this case is impossible, essentially because it is not compatible with the ``short'' boundary conditions on $v$ \eqref{squeeze} combined with Lemma~\ref{lem:HCcurl}.

Due to the first equation in \eqref{HCcurl}, there exists $\alpha:(-h,h)\to\R$ measurable such that for all $y=(y_1,y_2)\in\Omega_h$, 
\begin{align}\label{gamma_theta}
	\gamma(y)=\theta(y)+\alpha(y_2)
	\quad\text{for a.e.~$y=(y_1,y_2)\in\Omega_h$.}
\end{align}
For each $y_2\in (-h,h)$, choose an orthonormal basis $b_1(y_2),b_2(y_2)$ of $\R^2$ such that $\det(b_1|b_2)=1$ and $b_1$ points in direction of $v(0,y_2)-v(L,y_2)$, i.e., 
\begin{align}\label{p-smoothturn-1}
	b_1(y_2) \cdot(v(0,y_2)-v(L,y_2))=\abs{v(0,y_2)-v(L,y_2)},\quad
	b_2(y_2) \cdot(v(0,y_2)-v(L,y_2))=0
\end{align}                                                      
and
\begin{align*}
	(b_1|b_2)\in \SO(2),~~~\text{whence $(b_1|b_2)=R_{\theta_0}$ for some $\theta_0=\theta_0(y_2)$.}
\end{align*}                                                       
We set
\[ 
	\hat\theta(y):=\theta(y)+\theta_0(y_2)\qand \hat\alpha(y_2):=\alpha(y_2)-\theta_0(y_2),\quad\text{ for }y\in\Omega_h,
\]
and find with basic computations and \eqref{gamma_theta} that
\[
	\vecr{\cos \hat{\theta}(y)\\ \sin \hat{\theta}(y)}=R_{\hat\theta(y)}e_1 = R_{\theta_0(y_2)}R_{\theta(y)} e_1\qand \gamma(y)=\hat\theta(y)+\hat\alpha(y_2).
\]                                     
Moreover, for all $y_2\in (-h,h)$, 
\begin{align}\label{p-smoothturn-2}
	\frac{1}{L}\int_0^L \sin\hat{\theta}\dd y_1=\frac{1}{L}\int_0^L b_2\cdot \vecr{\cos\theta\\\sin\theta}\dd y_1 =\frac{1}{L}b_2\cdot\int_0^L \partial_1 v\dd y_1=0,
\end{align}
the latter by \eqref{p-smoothturn-1}.
Similarly, \eqref{p-smoothturn-1} and \eqref{squeeze} give that
\begin{align}\label{p-smoothturn-3}
	0\leq \frac{1}{L}\int_0^L \cos\hat{\theta} \dd y_1=\frac{1}{L} b_1\cdot(v(0,\cdot)-v(L,\cdot))\leq \delta.
\end{align}
On the other hand, 
\[
	\gamma(y)=\hat{\theta}(y)+\hat{\alpha}(y_2)\in (-\eta,\eta)
	~~~\text{for}~~y_1\in (0,L)\setminus G_\eta(y_2), 
\]
by definition of $G_\eta$. If $y_2\in V$ so that $|G_\eta(y_2)|<\eta$, we obtain that
\begin{align}\label{p-smoothturn-4}
\begin{aligned}
	\int_0^L |\sin\hat{\theta}-\sin(-\hat\alpha)| \dd y_1 &\leq 2|G_\eta|+\int_{(0,L)\setminus G_\eta} |\sin(\gamma-\hat{\alpha})-\sin(-\hat{\alpha})|\dd y_1 \\
	&< 2\eta + \int_{(0,L)\setminus G_\eta} |\gamma|\dd y_1< 2\eta(1+L).
\end{aligned}
\end{align}
Analogously, we can estimate
\begin{align}\label{p-smoothturn-5}
	\int_0^L |\cos\hat{\theta}-\cos(-\hat\alpha)| \dd y_1 < 2\eta(1+L).
\end{align}
for all $y_2\in V$. As $\hat\alpha=\hat\alpha(y_2)$ is independent of $y_1$, we can combine \eqref{p-smoothturn-2} and \eqref{p-smoothturn-3} with \eqref{p-smoothturn-4} and \eqref{p-smoothturn-5}, respectively, to get that for all $y_2\in V$,
\begin{align}\label{p-smoothturn-6}
  |\sin(-\hat\alpha)|< 2\eta(1+L)\qand -2\eta(1+L)\leq\cos(-\hat\alpha)\leq \delta+2\eta(1+L).
\end{align}
Here, notice that since $\delta<1$, \eqref{p-smoothturn-6} for small $\eta$ means that $\hat\alpha$ has to stay close to the set $\pi\Z=\sin^{-1}(\{0\})$ 
while keeping a distance from $\pi\Z=\cos^{-1}(\{\pm 1\})$, the same set.
Of course, this is impossible if $\eta=\eta(\delta,L)$ is small enough.
\end{proof}

\subsection*{Acknowledgments}
This work had started while DE was still affiliated with Universiteit Utrecht. 
DE acknowledges the financial support by the NDNS+ Ph.D.~travel grant and would like to thank Carolin Kreisbeck for suggesting this project and sharing some initial ideas. The work of SK and MK were supported by the GA \v{C}R-FWF project 21-06569K.


\bibliographystyle{abbrv}
\bibliography{EnglKroemerKruzik2023_Bib}
\end{document}